\newtheorem{theorem}{Theorem}[section]
\newtheorem*{theorem*}{Theorem}
\newtheorem{proposition}[theorem]{Proposition}
\newtheorem{corollary}[theorem]{Corollary}
\newtheorem*{conjecture*}{Conjecture}
\theoremstyle{remark}
\newtheorem{remark}[theorem]{Remark}
\newtheorem{example}[theorem]{Example}
\theoremstyle{definition}
\newcommand{\ie}{{\em i.e.~}\ }
\newcommand{\opname}[1]{\operatorname{\mathsf{#1}}}
\renewcommand{\mod}{\opname{mod}\nolimits}
\newcommand{\proj}{\opname{proj}\nolimits}
\newcommand{\Mod}{\opname{Mod}\nolimits}
\newcommand{\add}{\opname{add}\nolimits}
\newcommand{\coh}{\opname{coh}\nolimits}
\newcommand{\cok}{\opname{cok}\nolimits}
\renewcommand{\ker}{\opname{ker}\nolimits}
\newcommand{\thick}{\opname{thick}\nolimits}
\newcommand{\per}{\opname{per}\nolimits}
\newcommand{\Hom}{\opname{Hom}}
\newcommand{\End}{\opname{End}}
\newcommand{\Ext}{\opname{Ext}}
\newcommand{\rad}{\mathrm{rad}}
\newcommand{\ten}{\otimes}
\newcommand{\Cone}{\opname{Cone}}
\newcommand{\cb}{{\mathcal B}}
\newcommand{\cc}{{\mathcal C}}
\newcommand{\cd}{{\mathcal D}}
\newcommand{\ch}{{\mathcal H}}
\newcommand{\del}{\partial}
\numberwithin{equation}{section}
\begin{document}

\title[]{Some examples of $t$-structures for finite-dimensional algebras}
\author{Dong Yang}
\address{Dong Yang, Department of Mathematics, Nanjing University, Nanjing 210093, P. R. China}
\email{yangdong@nju.edu.cn}
\dedicatory{Dedicated to Lidia Angeleri H\"ugel on the occasion of her 60th birthday}

\date{\today}

\begin{abstract}
We describe the heart of the canonical $t$-structure on the perfect derived category of a strictly positive graded algebra as the module category over the quadratic dual. Applying this result we obtain examples showing new phenomena on $t$-structures on derived categories of finite-dimensional algebras.\\
{\bf Keywords:} $t$-structure, derived category, derived representation type, piecewise hereditary algebra, quiver mutation.\\
{\bf MSC 2020:} 16E35, 16G20, 16E45.
\end{abstract}

\maketitle

\section{Introduction}
The heart of a t-structure on the derived category of a given abelian category is another abelian category sitting inside the derived category. In this paper we compare the heart with the given abelian category in the setting of finite-dimensional algebras and obtain some examples showing new phenomena. More precisely, let $A$ and $B$ be finite-dimensional algebras such that the category $\mod B$ of finite-dimensional $B$-modules is the heart of a bounded t-structure on the bounded derived category $\cd^b(\mod A)$ of $\mod A$; we show that the following situations occur:
\begin{enumerate}
\item $A$ is hereditary of type $\mathbb{D}_n$ or $\mathbb{E}_{6,7,8}$, $B$ is connected but is not derived equivalent to $A$.
\item $A$ is derived finite while $B$ is derived tame; $A$ is derived tame while $B$ is derived wild.
\item $A$ is piecewise hereditary, but $B$ is not. 
\item $A$ and $B$ are derived equivalent, but the embedding $\mod B\hookrightarrow \cd^b(\mod A)$ induced from the $t$-structure does not extend to a derived equivalence.
\end{enumerate}
Concrete examples are given in Section~\ref{s:example}. The phenomenon (4) was first observed by Martin Kalck and is recently found to occur already in Dynkin type $\mathbb{D}_5$, as a consequence of the classification of silted algebras of a hereditary algebra of type $\mathbb{D}_5$ in \cite{Xing20}. In \cite{BuanZhou18}, Buan and Zhou showed it possible that $A$ has finite global dimension while $B$ has infinite global dimension. 

\smallskip
Our examples also show that there are unexpected connections between
connected quivers of different type. It is known that connected quivers of different type are not derived equivalent, but they can be related by a bounded $t$-structure (up to derived equivalence). For example, in this way the following
pairs of quivers are related to each other
\begin{eqnarray*}
\xymatrix@R=0.5pc@C=1pc{\\ \cdot\ar[r]&\cdot\ar[r]&\cdot\ar[r]&\cdot~\\
} &\xymatrix@R=0.5pc{\\ \text{ and } \\ }&
\xymatrix@R=0.5pc@C=1pc{\cdot\ar[dr]&&\\
&\cdot\ar[r]&\cdot\\
\cdot\ar[ur] }
\end{eqnarray*}
\begin{eqnarray*}
\xymatrix@R=0.2pc@C=1pc{& &\cdot\ar[dd]&& \\ \\
\cdot\ar[r] & \cdot\ar[r] & \cdot\ar[r] & \cdot\ar[r] &\cdot~}
&\xymatrix@R=0.2pc@C=1pc{\\ \text{ and }\\ }&
 \xymatrix@R=0.2pc@C=1pc{&\cdot\ar[r]&\cdot\ar[rd]&\\
\cdot\ar[ru]\ar[rd]&&&\cdot\\
&\cdot\ar[r]&\cdot\ar[ru]&}
\end{eqnarray*}
\begin{eqnarray*}
\xymatrix@R=0.2pc@C=1pc{& &&\cdot\ar[dd]&&& \\ \\
\cdot\ar[r] & \cdot\ar[r] & \cdot\ar[r] & \cdot\ar[r]
&\cdot\ar[r]&\cdot\ar[r]&\cdot~}& \xymatrix@R=0.2pc@C=1pc{\\ \text{
and }\\ }&
\xymatrix@R=0.2pc@C=1pc{\cdot\ar[r]&\cdot\ar[r]&\cdot\ar[dd]&\cdot\ar[l]\\
\\ \cdot\ar[r]&\cdot\ar[r]&\cdot\ar[r]&\cdot}
\end{eqnarray*}

The construction of all examples is based on the following result. 

\begin{theorem*}[{\bf \ref{t:heart-2}}]
Let $Q$ be a finite quiver which is gradable in the sense of \cite[Section 1.1]{BautistaLiu17}, and let $A$ be a quotient algebra of the path algebra $kQ$. Then $\cd^b(\mod A)$ admits a bounded $t$-structure whose heart is equivalent to the category of finite-dimensional (non graded) modules over the quadratic dual $A^!$ of $A$. If all the relations of $A$ are of length at least $3$, then this heart is derived equivalent to $kQ$.
\end{theorem*}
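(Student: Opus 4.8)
The plan is to pass to a graded setup where a Koszul-type duality is available, and then descend. First I would introduce the grading on $A$: since $Q$ is gradable, fix a grading on $kQ$ by assigning integer weights to the arrows so that every relation of $A$ is homogeneous, and such that every arrow has strictly positive weight; this makes $A=\bigoplus_{i\geq 0}A_i$ a strictly positive graded algebra with $A_0$ semisimple. The abstract from this very paper tells us that the heart of the canonical $t$-structure on $\per(A)$ (the perfect derived category of the graded algebra) is equivalent to $\mod A^!$, the module category over the quadratic dual. So the core of the argument is to transport this statement from the graded derived category of $A$ to the ordinary bounded derived category $\cd^b(\mod A)$, i.e.\ to forget the grading in a controlled way.

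The key steps, in order, would be: (i) Record that $\per(A)$ for the graded algebra $A$ carries its canonical (Koszul) $t$-structure with heart $\mod A^!$, citing the result quoted in the abstract. (ii) Observe that the forgetful functor from graded $A$-modules to ungraded $A$-modules induces a triangle functor $\per(A)\to \cd^b(\mod A)$ (the ungraded algebra is finite-dimensional since $A$ has finitely many arrows and bounded-degree relations, so $A$ itself is finite-dimensional — wait, one must be careful here: if relations have unbounded length $A$ need not be finite-dimensional, so I would first reduce to the case where $A$ is finite-dimensional, which is automatic when $\gldim$ considerations or the ``length at least $3$'' clause enter, and otherwise truncate; in the generality of the theorem one works with $\per(A)$ directly and uses that the heart is a length category with finitely many simples, namely $A^!$ is finite-dimensional because $A^!$ is generated in degrees $0$ and $1$). (iii) Transport the $t$-structure along this functor: the aisle and co-aisle of the canonical $t$-structure on $\per(A)$ map to subcategories of $\cd^b(\mod A)$ that one checks satisfy the axioms of a $t$-structure, using that the forgetful functor is exact, detects acyclicity, and that shifting the grading by one corresponds to an autoequivalence that becomes trivial after forgetting — the point is that $\Hom$-vanishing conditions in the graded category, summed over all internal degrees, give the $\Hom$-vanishing in the ungraded category. (iv) Identify the heart: the heart downstairs is the full subcategory of $\cd^b(\mod A)$ generated (as an extension-closed, summand-closed subcategory) by the images of the graded simples placed in their appropriate cohomological degrees; this is equivalent to $\mod A^!$ because the graded-to-ungraded functor restricted to the heart $\mod A^!$ is fully faithful — the grading on $A^!$-modules is forgotten but no morphisms are created since the relevant $\Ext$-groups between graded simples are already concentrated in a single internal degree by strict positivity/quadraticity. (v) For the last sentence, when all relations have length $\geq 3$, the quadratic dual $A^!$ is the full path algebra $kQ^{op}$ of the opposite quiver (no quadratic relations survive), and $kQ^{op}$ is derived equivalent to $kQ$ since a quiver and its opposite are always derived equivalent; hence the heart $\mod A^!\simeq \mod kQ^{op}$ is derived equivalent to $kQ$.

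The main obstacle I anticipate is step (iii)–(iv): verifying carefully that ``forgetting the grading'' turns the canonical graded $t$-structure into an honest bounded $t$-structure on $\cd^b(\mod A)$, and in particular that the heart is not enlarged. The subtlety is that the forgetful functor $\cd^b(\grmod A)\to \cd^b(\mod A)$ is neither full nor faithful in general (it is a ``$\Z$-cover''), so one cannot simply push forward the $t$-structure formally; instead one must exhibit the aisle concretely inside $\cd^b(\mod A)$ and check the truncation triangles exist there, which amounts to showing that the graded truncations of a complex, after forgetting, still give a functorial triangle — this works because a complex over the ungraded $A$ can always be lifted to a graded complex (as $A$ is graded and the lift is non-canonical but exists on objects), and the graded truncation is then canonical enough. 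Establishing boundedness and that the heart has finite length with simples indexed by $Q_0$ — matching the simples of $A^!$ — is where the real content lies; the quadratic duality and the ``length $\geq 3$'' observation are then essentially formal.
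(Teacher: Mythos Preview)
Your proposal contains a genuine error in step (v): when all relations have length at least $3$, the quadratic dual $A^!$ is \emph{not} the path algebra $kQ^{op}$ but the radical-square-zero algebra $kQ^{op}/(\text{arrows})^2$. By definition $A^! = \widehat{k(Q^1)^{op}}/\overline{(R_2^\perp)}$, where $R_2^\perp \subset DV\ten_K DV$ is the orthogonal to the span of $R_2$ in $V\ten_K V$. If $R_2$ is empty then its orthogonal is all of $DV\ten_K DV$, so every path of length $2$ is a relation in $A^!$. Your reasoning ``no quadratic relations survive'' has the orthogonal the wrong way round. Consequently the derived equivalence with $kQ$ is not the easy $Q\leftrightarrow Q^{op}$ statement you invoke, but the nontrivial fact (Bautista--Liu, cited in the paper) that the radical-square-zero algebra on a gradable quiver is derived equivalent to the path algebra of that quiver.

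Your transport mechanism (ii)--(iv) is also not the paper's route, and the obstacle you flag there is real rather than merely technical. The paper does not use a forgetful functor from graded to ungraded modules. It views $A^{gr}$ as a dg algebra with trivial differential --- so the internal grading \emph{becomes} the cohomological grading --- and exhibits $\bigoplus_{i\in Q_0}\Sigma^{-t_i}P_i^{gr}$ as a tilting object of $\per(A^{gr})$ with endomorphism algebra equal to the ungraded algebra $A$; the integers $t_i$ exist precisely because $Q$ is gradable. This yields a triangle \emph{equivalence} $\per(A^{gr})\simeq \per(A)=\cd^b(\mod A)$, along which the canonical $t$-structure of Theorem~\ref{t:heart} transports for free. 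Your forgetful-functor picture conflates the dg-cohomological grading with an internal module grading on a $\Z$-cover, and the fullness/faithfulness problems you anticipate are genuine obstructions to that approach; the tilting argument bypasses them entirely.
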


The definition of quadratic dual is given in Section~\ref{s:heart}. We remark that in our definition the quadratic dual is a non graded algebra.
Theorem~\ref{t:heart-2} is a consequence of the following more general theorem.

\begin{theorem*}[\bf \ref{t:heart}]
Let $A=kQ/(R)$ be a graded algebra, where $Q$ is a finite graded quiver with all arrows in positive degrees and $R$ consists of homogeneous elements of the path algebra $kQ$ contained in the square of the radical of $kQ$. Then the perfect derived category of $A$, viewed as a dg algebra with trivial differential, admits a bounded $t$-structure whose heart is equivalent to the category of finite-dimensional (non graded) modules over the quadratic dual $A^!$.
\end{theorem*}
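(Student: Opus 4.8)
The plan is to produce the $t$-structure by Koszul duality, transporting the standard one along an equivalence of triangulated categories. Write $A_0$ for the semisimple degree-zero part of $A$, regarded as a dg $A$-module concentrated in cohomological degree $0$, and set $E=\RHom_A(A_0,A_0)$, the dg endomorphism algebra of a semifree resolution of $A_0$. The first step is to determine the shape of $E$. Resolving $A_0$ by its minimal graded-projective resolution and passing to the dg world — where a generator of internal degree $d$ occurring in homological degree $p$ of the resolution acquires cohomological degree $d-p$, and the differential of the $\Hom$-complex into $A_0$ vanishes by minimality — one reads off
\[
\Hom_{\der(A)}(A_0,A_0[n])\;\cong\;\prod_{p\ge 0}\Ext^p_A(A_0,A_0)_{\,p-n}.
\]
The crucial input is now exactly the strict positivity of the grading: the $p$-th syzygy of $A_0$ lives in internal degrees $\ge p$, so the right-hand side vanishes for $n>0$. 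Thus $E$ is a \emph{connective} dg algebra, and $H^0(E)=\prod_{p}\Ext^p_A(A_0,A_0)_p$ is the diagonal part of the Yoneda algebra. By the usual identification of the diagonal Yoneda algebra with the quadratic dual — the degree-one arrows contribute the generators (with $Q$ reversed), the degree-two part of $R$ contributes the perpendicular quadratic relations, and there is no product along the diagonal beyond these — this is precisely $(A^!)\op$, with $H^{<0}(E)$ encoding the relations of degree $>2$ (so $H^{<0}(E)=0$ iff $A$ is Koszul).

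The second, and central, step is the Koszul duality equivalence. The functor $\RHom_A(A_0,-)\colon\der(A)\to\der(E\op)$, with left adjoint $-\otimes^{\mathbf L}_E A_0$, should be shown to restrict to an equivalence between $\per A$ and the full subcategory $\der_{\mathrm{fd}}(E\op)$ of dg $E\op$-modules with finite-dimensional total cohomology; this is the standard Koszul-duality dichotomy, but its hypotheses must be checked with care, and this is the step I expect to be the main obstacle. The point is that $A_0$ need not be a \emph{compact} object of $\der(A)$ (it is compact precisely when $A$ has finite graded global dimension), so one cannot simply invoke a derived-Morita statement with $A_0$ as a compact generator; rather $A$ itself is the compact generator, $A_0$ plays the ``cocompact'' role, and one must verify directly that $\per A=\thick(A)$ is carried onto $\der_{\mathrm{fd}}(E\op)$ — equivalently, that $\RHom_A(A_0,A)$ has finite-dimensional cohomology and generates $\der_{\mathrm{fd}}(E\op)$ as a thick subcategory. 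When $A$ is finite dimensional this goes through cleanly because then $A\in\thick(A_0)$; the general (possibly infinite-dimensional) case requires more bookkeeping, and in the non-noetherian situation appropriate completions on the $A^!$-side.

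Finally, transport the $t$-structure. Since $E\op$ is connective, $\der(E\op)$ carries its standard $t$-structure with heart $\Mod H^0(E\op)=\Mod A^!$, and this restricts to a bounded $t$-structure on $\der_{\mathrm{fd}}(E\op)$ whose heart is the category $\mod A^!$ of finite-dimensional $A^!$-modules. Pulling it back along the equivalence of the second step yields a bounded $t$-structure on $\per A$ with heart $\mod A^!$, as asserted; boundedness is automatic from that of the standard $t$-structure on $\der_{\mathrm{fd}}(E\op)$.
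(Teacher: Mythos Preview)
Your approach is essentially the same as the paper's: both pass to the dg endomorphism algebra of the semisimple quotient (your $E$, the paper's $B\simeq A^*$), check it is connective, compute its $H^0$ as $A^!$, and read off the heart. The paper simply outsources what you call the ``main obstacle''---the equivalence $\per(A)\simeq\cd_{fd}(E\op)$ and the resulting identification of the heart with $\mod H^0$---to \cite{Schnuerer11} and \cite[Theorem~7.1]{KellerNicolas13}, so your honest flagging of that step matches the paper's reliance on those references rather than indicating a gap.

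The one substantive difference is in the computation of $H^0$. You invoke the identification of the diagonal Yoneda algebra $\prod_p\Ext^p(A_0,A_0)_p$ with the (completed) quadratic dual as a known fact. The paper instead constructs the Koszul dual $A^*=\widehat{T}_K(D\bar A[1])$ explicitly with its cobar differential~\eqref{eq:differential}, observes that the degree-$0$ part is $\widehat{k(Q^1)\op}$, and checks by hand that the image of $d$ in degree~$0$ is exactly the closure of $(R_2^\perp)$: elements $f\in D(V^2)$ have $d(f)=0$, while $f\in D((V\otimes_K V)/R_2)$ satisfy $d(f)\in R_2^\perp$ under the canonical identification. This direct computation is more self-contained than your appeal to the diagonal-Yoneda fact (which is correct, but is itself essentially what the cobar argument proves), and it makes transparent why only the degree-$1$ arrows and degree-$2$ relations survive. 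Conversely, your formula $H^n(E)\cong\prod_p\Ext^p(A_0,A_0)_{p-n}$, derived from the minimal resolution, gives a cleaner conceptual explanation of connectivity than the paper's citation.
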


In Section~\ref{s:preliminary} we recall the basics on derived categories of dg algebras, Koszul duality, silting objects, simple-minded collections and $t$-structures.
In Section~\ref{s:ADE} we extend the ADE chain and obtain piecewise hereditary algebras of tree type $\mathbb{T}_{2,\ldots,2,n}$. In Section~\ref{s:heart} we prove the main results and in Section~\ref{s:example} we provide concrete examples.

\smallskip
Throughout, let $k$ be a field.

\subsection*{Acknowledgement.} The author thanks Jorge Vit\'oria, Adam-Christiaan van Roosmalen and Martin Kalck for stimulating discussions. He thanks Adam-Christiaan van Roosmalen for pointing out an error in an old version of Theorem~\ref{t:heart-2}. He acknowledges support by the National Natural Science Foundation of China No. 11671207 and the DFG program SPP 1388 (YA297/1-1 and KO1281/9-1).

\section{Preliminaries}\label{s:preliminary}

In this section, we recall some basics on quivers, derived categories of dg algebras, Koszul duality, silting objects, simple-minded collections and $t$-structures.

\subsection{Quivers}
\label{ss:quivers}

Let $Q$ be a finite quiver. We denote its vertex set by $Q_0$ and its arrow set by $Q_1$. 
For $\alpha\in Q_1$, denote by $s(\alpha)$ and $t(\alpha)$ the source and the target of $\alpha$, respectively. For each $i\in Q_0$, there is a \emph{trivial path} $e_i$ with $s(e_i)=t(e_i)=i$.  For $\alpha\in Q_1$, we introduce its formal inverse $\alpha^{-1}$ with $s(\alpha^{-1})=t(\alpha)$ and  $t(\alpha^{-1})=s(\alpha)$. 
A \emph{walk} is a formal product $w=c_m\cdots c_1$, where each $c_l$ is either a trivial path, or an arrow, or the inverse of an arrow such that $t(c_l)=s(c_{l+1})$ for $1\leq l\leq m-1$. We say that $w$ is a \emph{closed walk} if $s(c_1)=t(c_m)$. The \emph{virtual degree} of $w$ is defined as $d(w)=\sum_{l=1}^m d(c_l)$, where for $\alpha\in Q_1$ we define $d(\alpha)=1$ and $d(\alpha^{-1})=-1$ and for $i\in Q_0$ we define $d(e_i)=0$. Following \cite[Section 1.1]{BautistaLiu17}, we say that $Q$ is \emph{gradable} if every closed walk is of virtual degree $0$. If $Q$ is gradable, then $Q$ has no oriented cycles. The converse is not true, but if the underlying graph of $Q$ has no cycles, then $Q$ is gradable.

A \emph{non-trivial path} of $Q$ is a walk $\alpha_m\cdots\alpha_1$, where all $\alpha_l$ are arrows. 
The \emph{path algebra} $kQ$ of $Q$ is the $k$-algebra which has all paths of $Q$ (including the trivial paths) as basis and whose multiplication is given by concatenation of paths. If $Q$ is a graded quiver, then $kQ$ is naturally a graded $k$-algebra. The \emph{complete path algebra} $\widehat{kQ}$ of $Q$ is the completion of $kQ$ with respect to the $J$-adic topology, where $J$ is the ideal of $kQ$ generated by all arrows. Let $\widehat{J}$ be the ideal of $\widehat{kQ}$ generated by all arrows. For an ideal $I$ of $\widehat{kQ}$ let $\overline{I}$ denote the closure of $I$ with respect to the $\widehat{J}$-adic topology.

\subsection{Derived categories of dg algebras}

Let $A$ be a dg $k$-algebra, \ie a graded $k$-algebra equipped with the structure of a complex with differential $d$ such that the following graded Leibniz rule holds
\[d(a b)=d(a)b+(-1)^{|a|}ad(b),\]
where $a$ is homogeneous of degree $|a|$. We denote by $\cd(A)$ the derived category of (right) dg $A$-modules. It is a triangulated category with suspension functor $\Sigma$ being the shift $[1]$ of complexes. See~\cite{Keller94,Keller06d}. For an object $M$ of $\cd(A)$, denote by $\thick(M)$ the thick subcategory of $\cd(A)$ generated by $M$, \ie the smallest triangulated subcategory of $\cd(A)$ which contains $M$ and which is closed under taking direct summands. Let $\per(A):=\thick(A_A)$ denote the thick subcategory of $\cd(A)$ generated by the free dg $A$-module of rank $1$ and let $\cd_{fd}(A)$ denote the full (triangulated) subcategory of $\cd(A)$ consisting of those dg $A$-modules with finite-dimensional total cohomology. In the case that $A$ is a finite-dimensional $k$-algebra (viewed as a dg algebra concentrated in degree $0$), we have $\cd(A)=\cd(\Mod A)$, $\per(A)\simeq \ch^b(\proj A)$ and $\cd_{fd}(A)\simeq\cd^b(\mod A)$. Here for a $k$-algebra $A$, we denote by $\mod A$ the category of finite-dimensional (right) $A$-modules, by $\proj A$ the category of finitely generated projective $A$-modules, by $\ch^b(\proj A)$ the bounded homotopy category of $\proj A$ and by $\cd^b(\mod A)$ the bounded derived category of $\mod A$.

\subsection{Silting objects, simple-minded collections and $t$-structures}\label{ss:t-str}
Let  
$\cc$ be a Krull--Schmidt triangulated $k$-category with suspension functor $\Sigma$.
 
An object $M$ of $\cc$ is called a \emph{silting object} if 
\begin{itemize}
\item[$\cdot$]
$\Hom(M,\Sigma^p M)=0$ for $p>0$, 
\item[$\cdot$]
$\cc=\thick(M)$.
\end{itemize} 
A collection of objects $\{X_1,\ldots,X_n\}$ of $\cc$ is called a \emph{simple-minded collection} if 
\begin{itemize}
\item[$\cdot$] $\Hom(X_i,\Sigma^p X_j)=0$, for $p<0$ and $1\leq i,j\leq n$,
\item[$\cdot$] $\End(X_i)$ is a division algebra and $\Hom(X_i,X_j)=0$ for $1\leq i\neq j\leq n$,
\item[$\cdot$] $\cc=\thick(X_1,\ldots,X_r)$.
\end{itemize}
A \emph{$t$-structure} on $\cc$ is a pair $(\cc^{\leq
0},\cc^{\geq 0})$ of strictly full (that is, closed under
isomorphisms) subcategories of $\cc$ such that
\begin{itemize}
\item[$\cdot$] $\Sigma\cc^{\leq 0}\subseteq\cc^{\leq 0}$ and
$\Sigma^{-1}\cc^{\geq 0}\subseteq\cc^{\geq 0}$;
\item[$\cdot$] $\Hom(M,\Sigma^{-1}N)=0$ for $M\in\cc^{\leq 0}$
and $N\in\cc^{\geq 0}$,
\item[$\cdot$] for any $M\in\cc$ there is a triangle $M'\rightarrow
M\rightarrow M''\rightarrow\Sigma M'$ in $\cc$ with $M'\in\cc^{\leq
0}$ and $M''\in\Sigma^{-1}\cc^{\geq 0}$.
\end{itemize}
The \emph{heart} $\ch=\cc^{\leq 0}~\cap~\cc^{\geq 0}$ is
always abelian. The
$t$-structure $(\cc^{\leq 0},\cc^{\geq 0})$ is said to  be
\emph{bounded} if $\cc=\thick(\ch)$ and \emph{algebraic} if in addition all objects of its heart $\ch$ have finite length.

\begin{theorem}
\label{t:koenigy}
\emph{(\cite[Theorem 6.1, Lemma 5.2, Lemma 5.3(a) and Proposition 5.4]{KoenigYang14})}
Let $A$ be a finite-dimensional $k$-algebra.
There are one-to-one correspondences among the following sets:
\begin{itemize}
\item[(i)] basic silting objects of $\ch^b(\proj A)$, up to isomorphism, 
\item[(ii)] simple-minded collections of $\cd^b(\mod A)$, up to isomorphism, 
\item[(iii)] algebraic $t$-structures on $\cd^b(\mod A)$.
\end{itemize}

Under the above correspondences, a basic silting object $M=M_1\oplus\ldots\oplus M_n$ and the corresponding simple-minded collection $\{X_1,\ldots,X_n\}$ determine each other (up to reordering) by the following Hom-duality
\[\begin{array}{ll}
& \Hom(M_i,\Sigma^p X_j)=0 \text{ if } p\neq 0 \text{ or } i\neq j\\
&\Hom(M_i,X_i) \text{ is free of rank $1$ over } \End(X_i);
\end{array}\]
the heart of an algebraic $t$-structure is the extension closure of the corresponding simple-minded collection and is equivalent to the module category over the endomorphism algebra of the corresponding silting object.
\end{theorem}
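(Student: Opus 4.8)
The plan is to establish the correspondences (ii)$\leftrightarrow$(iii) and (i)$\leftrightarrow$(ii) by mutually inverse constructions, reading off the description of the heart along the way; the only inputs needed are Schur's lemma, Jordan--H\"older, the equality $\rank K_0(\cd^b(\mod A))=\rank K_0(\mod A)=n$ (the number of simple $A$-modules), and the structural finiteness that every object of $\cd^b(\mod A)$ has bounded cohomology. Write $\cc=\cd^b(\mod A)$ and regard $\ch^b(\proj A)=\per(A)$ as a thick subcategory of $\cc$. (This is the case $B=A$ of K\"onig and Yang's statement for non-positive dg algebras $B$ with $H^0(B)$ finite-dimensional.)

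For (ii)$\Leftrightarrow$(iii): given a simple-minded collection $\{X_1,\dots,X_n\}$, let $\cc^{\le 0}$ be the smallest full subcategory containing all $\Sigma^m X_i$ with $m\ge0$ and closed under extensions and isomorphisms, and let $\cc^{\ge 0}$ be the associated co-aisle. The shift conditions are clear, the orthogonality $\Hom(\cc^{\le0},\Sigma^{-1}\cc^{\ge0})=0$ comes from $\Hom(X_i,\Sigma^pX_j)=0$ for $p<0$, and the truncation triangles are built by induction on the number of shifts of the $X_i$ needed to assemble a given object, which is finite because $\cc=\thick(X_1,\dots,X_n)$ and objects of $\cc$ have bounded cohomology. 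The heart is the extension closure of $\{X_i\}$, a finite-length category with the $X_i$ as its simple objects, so the $t$-structure is algebraic. Conversely, an algebraic $t$-structure with heart $\ch$ has $K_0(\ch)\cong K_0(\cc)\cong\Z^n$ by boundedness, and since $\ch$ has finite length its $K_0$ is free on the simples, so $\ch$ has exactly $n$ simple objects $X_1,\dots,X_n$; the $t$-structure axioms give $\Hom(X_i,\Sigma^pX_j)=0$ for $p<0$, Schur's lemma in $\ch$ gives the division-ring and orthogonality conditions, and $\thick(X_1,\dots,X_n)\supseteq\thick(\ch)=\cc$. These assignments are mutually inverse.

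For (i)$\Leftrightarrow$(ii), with the heart: a basic silting object $M=M_1\oplus\dots\oplus M_n$ of $\ch^b(\proj A)$ determines the $t$-structure on $\cc$ with aisle $\{Y:\Hom(M,\Sigma^pY)=0 \text{ for } p>0\}$ and co-aisle $\{Y:\Hom(M,\Sigma^pY)=0 \text{ for } p<0\}$, whose heart is $\{Y:\Hom(M,\Sigma^pY)=0 \text{ for } p\ne0\}$; the functor $\Hom(M,-)$ identifies this heart with $\mod\End(M)$, sending the summands $M_i$ to the indecomposable projectives, and one defines $X_i$ to be the object going to the $i$-th simple $\End(M)$-module. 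The Hom-duality $\Hom(M_i,\Sigma^pX_j)=0$ unless $(p,i)=(0,j)$, with $\Hom(M_i,X_i)$ free of rank one over $\End(X_i)$, then merely restates the projective--simple pairing over $\End(M)$; and $\{X_i\}$ is a simple-minded collection because Homs among the $X_i$ can be computed through the $M_j$ using $\Hom(M_j,\Sigma^{>0}M_k)=0$, and $\thick(X_1,\dots,X_n)=\thick(M)=\ch^b(\proj A)$. Conversely, given a simple-minded collection $\{X_i\}$ with heart $\ch$ as above, one constructs the dual object by building each $M_i\in\ch^b(\proj A)$ as a finite iterated extension of shifts of indecomposable projective $A$-modules that ``co-resolves'' $X_i$ in accordance with the duality formula, the $t$-structure dictating which shifts occur; then $M=\bigoplus M_i$ satisfies $\Hom(M,\Sigma^{>0}M)=0$, and since the classes $[M_i]$ form a basis of $K_0(\ch^b(\proj A))\cong\Z^n$ this presilting object must already generate, i.e. be silting. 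The two constructions are mutually inverse, and by construction the heart of the $t$-structure attached to $M$ is the extension closure of the corresponding $\{X_i\}$ and is equivalent, via $\Hom(M,-)$, to $\mod\End(M)$.

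I expect the main obstacle to be the step (ii)$\Rightarrow$(i): producing the silting object $M$ from an abstract simple-minded collection and, especially, proving that it generates $\ch^b(\proj A)$ --- equivalently, that the heart of an SMC-generated algebraic $t$-structure admits a projective generator. This genuinely uses the boundedness of the $t$-structure and requires a careful d\'evissage. The remaining points --- the $t$-structure axioms for the aisle attached to a simple-minded collection, the Schur-type and orthogonality conditions, and the identification of the heart --- follow formally from the $t$-structure axioms, Schur's lemma, Jordan--H\"older, and the $K_0$-count.
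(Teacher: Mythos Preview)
The paper does not supply its own proof of this theorem: it is quoted verbatim from \cite{KoenigYang14} and used as a black box. So there is nothing in the paper to compare your argument against; what one can compare to is the cited source, and your outline does broadly follow the architecture of that paper (simple-minded collections $\leftrightarrow$ algebraic $t$-structures via extension closure and Jordan--H\"older, silting $\leftrightarrow$ $t$-structure via $\Hom(M,-)$, simples of the heart giving the Hom-duality).

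There is, however, a genuine gap precisely where you predicted. In the step (ii)$\Rightarrow$(i) you write that once the $M_i$ are constructed and $M=\bigoplus M_i$ is presilting, ``since the classes $[M_i]$ form a basis of $K_0(\ch^b(\proj A))\cong\Z^n$ this presilting object must already generate, i.e.\ be silting.'' This inference is not valid in general: a presilting object whose summands give a $\Z$-basis of $K_0$ need not thickly generate $\ch^b(\proj A)$. K\"onig--Yang do \emph{not} argue via $K_0$; they pass through bounded co-$t$-structures (equivalently weight structures) on $\ch^b(\proj A)$: the simple-minded collection yields a bounded $t$-structure, from which one produces a bounded co-$t$-structure whose co-heart is additive with a basic additive generator $M$, and boundedness of the co-$t$-structure is exactly what forces $\thick(M)=\ch^b(\proj A)$. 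Your sketch of ``co-resolving $X_i$ by shifts of indecomposable projectives'' is morally the right construction of the $M_i$, but you still owe the generation argument, and the $K_0$ shortcut does not supply it.

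A smaller slip: in the (i)$\Rightarrow$(ii) direction you write $\thick(X_1,\dots,X_n)=\thick(M)=\ch^b(\proj A)$. The $X_i$ live in $\cd^b(\mod A)$, and what one needs (and gets, from boundedness of the $t$-structure) is $\thick(X_1,\dots,X_n)=\cd^b(\mod A)$; the equality $\thick(M)=\ch^b(\proj A)$ is a separate statement in a possibly smaller category.
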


For example, the following correspond to each other for a basic finite-dimensional $k$-algebra $A$:
\begin{itemize}
\item[-] the free $A$-module $A_A$ of rank $1$,
\item[-] a complete set of pairwise non-isomorphic simple $A$-modules,
\item[-] $(\cd^{\leq 0}_{\mathrm{std}},\cd^{\geq 0}_{\mathrm{std}})$, where $\cd^{\leq 0}_{\mathrm{std}}$ (respectively, $\cd^{\geq 0}_{\mathrm{std}}$) is the full subcategory of $\cd^b(\mod A)$ consisting of complexes with trivial cohomologies in positive degrees (respectively, in negative degrees).
\end{itemize}

\section{Extending the ADE chain}\label{s:ADE}

A finite-dimensional $k$-algebra $A$ is said to be \emph{piecewise hereditary}~\cite{HappelReitenSmalo96} if there
is a hereditary abelian category $\ch$ such that $\cd^b(\mod A)$ is triangle equivalent to $\cd^b(\ch)$. The category $\ch$ is called the \emph{type} of $A$. This
is well defined up to derived equivalence.
Typical examples of such $\ch$ are $\mod H$ for
some finite-dimensional hereditary algebra $H$ and the category
of coherent sheaves $\coh \mathbb{X}$ for the weighted projective line
$\mathbb{X}=\mathbb{X}_{p_1,\ldots,p_r}$ (defined by Geigle and Lenzing~\cite{GeigleLenzing87}) with
some weights $(p_1,\ldots,p_r)$. In this case, the algebra $H$ or the weighted
projective line $\mathbb{X}$ is also called the type of $A$. 
\smallskip

Consider the algebra $\Lambda(n,r)$ ($n> r\geq 3$) given by the linear quiver of type $\mathbb{A}_n$
\[\xymatrix@C=1.5pc{1\ar[r]&2\ar[r]&\cdot\ar@{.}[r]&\cdot\ar[r]&n}\]
with relations all paths of length $r$.

For $n=1,\ldots,11$, the algebras $\Lambda(n,3)$ are piecewise
hereditary of type $\mathbb{A}_1,\mathbb{A}_2,\mathbb{A}_3$,
$\mathbb{D}_4,\mathbb{D}_5,\mathbb{E}_6,\mathbb{E}_7,\mathbb{E}_8,\tilde{\mathbb{E}}_8,\mathbb{X}_{236},\mathbb{X}_{237}$, respectively.
Notice that as $n$ increases, although the representation theory of
$\Lambda(n,3)$ does not change much (they are all of finite
representation type), the complexity of their derived categories
increases drastically ($\Lambda(n,3)$ is derived wild for $n\geq
11$). In~\cite{HappelSeidel10}, Happel and Seidel provide a complete
list of the pairs $(n,r)$ such that $\Lambda(n,r)$ is piecewise
hereditary and give the type:
\begin{itemize}
 \item[$\cdot$] the first family: $(n,r)=(n,n-1)$ (of type $\mathbb{D}_n$),
 \item[$\cdot$] the second family: $(n,r)=(n,n-2)$ (of type $\mathbb{T}_{23(n-3)}$),
 \item[$\cdot$] the third family: $(n,r)=(n,n-3)$ (of type $\mathbb{T}_{23(n-3)}$),
 \item[$\cdot$] the fourth family: $(n,r)=(n,n-4)$ (of type $\mathbb{X}_{23(n-4)}$),
 \item[$\cdot$] exceptional cases: $(n,r)=(8,3)$ (of type $\mathbb{E}_8=\mathbb{T}_{235}$),
 $(9,3)$ (of type $\tilde{\mathbb{E}}_8=\mathbb{T}_{236}$), $(10,3)$ (of type $\mathbb{X}_{236}$),
$(11,3)$ (of type $\mathbb{X}_{237}$), $(9,4)$ (of type
$\mathbb{X}_{244}$), $(10,4)$ (of type $\mathbb{X}_{245}$), $(10,5)$
(of type $\mathbb{T}_{237}$) and $(11,6)$ (of type
$\mathbb{X}_{237}$).
\end{itemize}
Here for a sequence of integers $(p_1,\ldots,p_r)$,
$\mathbb{T}_{p_1,\ldots,p_r}$ is the star quiver with $r$ arms which have
$p_1-1,\ldots,p_r-1$ arrows, respectively
\[\xymatrix@R=0.7pc@C=1.5pc{&\cdot\ar@{.}[r]&\cdot\ar[r]\ar@{--}[dd]&\cdot\\
\cdot\ar[ur]\ar[dr]&&\\
&\cdot\ar@{.}[r]&\cdot\ar[r]&\cdot}\]
Since this is a tree, it is derived equivalent to any quiver with the same underlying graph.

\medskip
We extend this result a little. For $m\geq 2$ and $n\geq 3$, consider the following quiver
\[
\begin{xy}0;<0.75pt,0pt>:<0pt,-0.75pt>::
(15,0) *+{1'}="1",
(-5,20) *+{2'}="2",
(15,120) *+{m'}="3",
(45,60) *+{2}="4",
(105,60) *+{3}="5",
(165,60) *+{n-1}="6",
(225,60) *+{n.}="7",
(15,40) *+{}="8",
(25,95) *+{}="9",
"1", {\ar "4"}, "2", {\ar"4"}, "3", {\ar "4"},
"4", {\ar "5"}, "5", {\ar@{.}"6"}, "6", {\ar"7"},
"8", {\ar@/_5pt/@{--} "9"},
\end{xy}
\]
Let $\Lambda'_m(n,n-1)$ be the algebra given by this quiver with relations all paths of length $n-1$. The main result of this section is

\begin{proposition}\label{p:extend-ADE}
 The algebra $\Lambda'_m(n,n-1)$ is piecewise hereditary of type $\mathbb{T}_{2^{m+1},n-2}$.
\end{proposition}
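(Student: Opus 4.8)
The plan is to realize $\Lambda'_m(n,n-1)$ inside the derived category of a hereditary algebra by exhibiting a suitable tilting (more precisely, silting) object, and then identify the type of that hereditary algebra. The starting point is the Happel–Seidel result recalled above: the algebra $\Lambda(n,n-1)$ — the linear $\mathbb{A}_n$ quiver with all paths of length $n-1$ as relations — is piecewise hereditary of type $\mathbb{D}_n=\mathbb{T}_{2,2,n-2}$. The algebra $\Lambda'_m(n,n-1)$ is built from $\Lambda(n,n-1)$ by replacing the single source vertex $1$ of the $\mathbb{A}_n$-line by a "fan" of $m$ sources $1',2',\ldots,m'$ all mapping into vertex $2$, with the relations again being all paths of length $n-1$ (so the arms $i'\to 2\to 3\to\cdots$ behave exactly as the old arm $1\to 2\to 3\to\cdots$). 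Thus one expects the new algebra to be piecewise hereditary of type obtained from $\mathbb{T}_{2,2,n-2}$ by "splitting" one of the length-$1$ arms into $m$ parallel length-$1$ arms, i.e. $\mathbb{T}_{\underbrace{2,\ldots,2}_{m+1},n-2}$. Note the $m=1$ case recovers exactly $\mathbb{D}_n=\mathbb{T}_{2,2,n-2}$, which is a good sanity check.

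Concretely, I would proceed as follows. First, write down explicitly the tilting complex (or the hereditary algebra $H$ and the tilting $H$-module) that witnesses the equivalence $\cd^b(\mod\Lambda(n,n-1))\simeq\cd^b(\mod k\mathbb{T}_{2,2,n-2})$; such a complex is implicit in Happel–Seidel's argument, and for the linear $\mathbb{A}_n$ quiver with relations of length $r$ it can be described by truncating the obvious projective resolutions along the arm. Second, observe that the arm of $\Lambda'_m(n,n-1)$ lying along vertices $2,3,\ldots,n$ together with any one source $i'$ is isomorphic (as a quotient of a path algebra with length-$(n-1)$ relations) to $\Lambda(n,n-1)$, and that the complement $\{1',\ldots,m'\}\setminus\{i'\}$ consists of sources whose projectives are simple (they sit at degree $0$ with a single arrow out). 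Using this, upgrade the $\Lambda(n,n-1)$-tilting complex to a silting object of $\ch^b(\proj\Lambda'_m(n,n-1))$ by adjoining the $m-1$ remaining simple projectives $P_{j'}$ (for $j'\neq i'$), suitably shifted if necessary so that the no-positive-self-extension condition of a silting object (Section~\ref{ss:t-str}) is preserved. Third, compute the endomorphism algebra of this silting object: the "old" part gives $k\mathbb{T}_{2,2,n-2}$ (equivalently, the linear $\mathbb{A}_n$ whose derived category it is), and each adjoined $P_{j'}$ contributes one new vertex attached by a single arrow to the vertex that played the role of the degree-$0$ socle of the relevant projective, namely to the branch point; so the quiver of the endomorphism algebra is $\mathbb{A}_n$ with $m-1$ extra pendant vertices at the branch end, whose underlying graph is $\mathbb{T}_{2^{m+1},n-2}$. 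Since $\mathbb{T}_{2^{m+1},n-2}$ is a tree, by the remark at the end of Section~\ref{s:ADE} any orientation is derived equivalent, so $\Lambda'_m(n,n-1)$ is piecewise hereditary of the claimed type.

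The main obstacle is the third step: verifying that adjoining the simple projectives $P_{j'}$ really does produce a \emph{silting} object (and computing its endomorphism algebra precisely), which requires controlling the Hom-spaces $\Hom(T,\Sigma^{>0}T)$ and $\Hom(P_{j'},\Sigma^\bullet T)$, where $T$ is the Happel–Seidel tilting complex for the sub-line. This amounts to a truncated-complex computation with the explicit projective resolutions of $\Lambda'_m(n,n-1)$-modules along the arm, using that all relations have the uniform length $n-1$; the potentially delicate point is that the branch point (vertex $2$) now receives $m$ arrows rather than one, so one must check that this extra multiplicity does not create unwanted morphisms between shifts of $T$ and the $P_{j'}$. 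An alternative, possibly cleaner route avoiding Step~1's dependence on the internals of Happel–Seidel is to construct a sequence of explicit tilting modules directly — mimicking the "APR-tilt"/reflection-functor style argument — transporting $k\mathbb{T}_{2^{m+1},n-2}$ (with a convenient orientation) step by step to $\Lambda'_m(n,n-1)$; I would fall back on this if the silting-object bookkeeping becomes unwieldy. Either way, the conceptual content is just that splitting a length-$1$ arm of the star commutes with the Happel–Seidel construction, with $m=1$ as the known base case.
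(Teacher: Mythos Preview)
Your proposal has a genuine gap. You aim to build a \emph{silting} object in $\ch^b(\proj\Lambda'_m(n,n-1))$ whose endomorphism algebra is hereditary of the claimed type, but a silting object yields only a bounded $t$-structure with that heart --- not a derived equivalence --- whereas ``piecewise hereditary'' means precisely that a derived equivalence with a hereditary category exists. Your parenthetical ``tilting (more precisely, silting)'' and the clause ``suitably shifted if necessary so that the no-positive-self-extension condition of a silting object is preserved'' show you are prepared to trade tilting for silting; once you do so, the conclusion no longer follows. (That silting endomorphism algebras can leave the derived-equivalence class of the ambient algebra is exactly the phenomenon explored in Section~\ref{s:example}.) A second, more technical issue is that the ``lift'' of the Happel--Seidel tilting complex from $\Lambda(n,n-1)$ to $\Lambda'_m(n,n-1)$ is not well-defined as stated: for $2\le l\le n-1$ the indecomposable projective $P_l$ over $\Lambda'_m(n,n-1)$ has nonzero components at every source $j'$, so a complex of $\Lambda(n,n-1)$-projectives does not become a complex of $\Lambda'_m(n,n-1)$-projectives by reinterpreting the symbols, nor does restriction along the quotient $\Lambda'_m(n,n-1)\to\Lambda(n,n-1)$ land in projectives.

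The paper's argument is a one-step construction that sidesteps both problems. Set $A=\Lambda'_m(n,n-1)$ and observe that every relation (a path of length $n-1$) ends at the sink $n$, so $A/Ae_nA$ is hereditary. Take $T=D(A/Ae_nA)\oplus P_n$. The length-one injective coresolution $0\to A/Ae_nA\to I^0\to I^1\to 0$ over $A/Ae_nA$ gives a short exact sequence $0\to A\to I^0\oplus P_n\to I^1\to 0$ of $A$-modules, and $P_n$ is projective-injective; hence $T$ is a tilting $A$-module. A direct computation of $\End_A(T)$ then gives the path algebra of the star $\mathbb{T}_{2^{m+1},n-2}$. The appendix also supplies an alternative proof via iterated APR tilts and graded quiver mutation --- close in spirit to your fallback suggestion, but carried out explicitly with a specific mutation sequence.
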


\begin{proof} Let $A=\Lambda'_m(n,n-1)$. 
Let $P_n$ be the indecomposable projective $A$-module corresponding to the vertex $n$. Consider $T=D(A/Ae_nA)\oplus P_n$. Because $A/Ae_nA$ is hereditary, there exists a short exact sequence of $A/Ae_nA$-modules
\[
\xymatrix{0\ar[r] & A/Ae_nA\ar[r] & I^0 \ar[r] & I^1\ar[r] & 0}
\]
with $I^0,I^1\in\add(D(A/Ae_nA))$, which induces a short exact sequence of $A$-modules
\[
\xymatrix{0\ar[r] & A\ar[r] & I^0\oplus P_n \ar[r] & I^1\ar[r] & 0.}
\]
Moreover, since $P_n$ is projective-injective, it follows that $T$ has no self-extensions. So $T$ is a tilting $A$-module. It is straightforward to check that the endomorphism algebra of $T$ is the path algebra of
\[
\begin{xy}0;<0.75pt,0pt>:<0pt,-0.75pt>::
(15,0) *+{1'}="1",
(-5,20) *+{2'}="2",
(15,120) *+{m'}="3",
(45,60) *+{2}="4",
(105,60) *+{3}="5",
(155,60) *+{\cdot}="6",
(225,60) *+{n-1.}="10",
(90,105) *+{n}="7",
(15,40) *+{}="8",
(25,95) *+{}="9",
"1", {\ar "4"}, "2", {\ar"4"}, "3", {\ar "4"},
"4", {\ar "5"}, "5", {\ar@{.}"6"}, "4", {\ar"7"},
"8", {\ar@/_5pt/@{--} "9"}, "6", {\ar "10"},
\end{xy}
\]
\end{proof}

An alternative proof of Proposition~\ref{p:extend-ADE} using quiver mutation is given in Appendix~\ref{a:quiver-mutation}.

\section{Quadratic hearts}\label{s:heart}

Let $Q$ be a positively graded finite quiver, that is, a finite graded quiver with all arrows in positive degrees, and $A= kQ/(R)$ with $R$ a set of minimal homogeneous relations. In this section we show that the heart of the canonical $t$-structure on $\per(A)$ described in \cite[Theorem 16]{Schnuerer11} and \cite[Theorem 7.1]{KellerNicolas13} is equivalent to the category of finite-dimensional modules over the quadratic dual of $A$.

\medskip
Write $R=R_2\cup R_{\geq 3}$, where $R_2$ (respectively, $R_{\geq 3}$) consists of relations in $R$ of degree $2$ (respectively, at least $3$).
Denote by $Q^1$ the degree $1$ component of $Q$, \ie the vertices of $Q^1$ are the same as $Q$ and the arrows of $Q^1$ are those arrows of $Q$ of degree $1$, and consider it as a quiver concentrated in degree $0$. The \emph{quadratic dual} of $A$ is defined as the algebra $A^!:=\widehat{k(Q^1)^{op}}/\overline{(R_2^{\perp})}$, the quotient of the complete path algebra of the opposite quiver $(Q^1)^{op}$ of $Q^1$ by the closure of the ideal generated by $R_2^{\perp}$ (for the definition of complete path algebras see Section~\ref{ss:quivers}). Here we consider the arrow span of the opposite quiver $(Q^1)^{op}$ as the dual space $DV:=\Hom_K(V,K)$ of the arrow span $V$ of $Q^1$, consider $R_2$ as a subset of $V\ten_K V$ and $R_2^{\perp}$ is the orthogonal to $R_2\subseteq V\ten_K V$ with respect to the standard bilinear form $(V\ten_K V)\times (DV\ten_K DV)\rightarrow k$. We remark that $A^!$ is a non graded algebra\footnote{If all arrows of $Q$ are in degree $1$ and $R=R_2$, then in the literature the quadratic dual of $A$ is defined as the graded algebra $kQ^{op}/(R^{\perp})$, where all arrows of $Q^{op}$ are put in degree $1$, see for example \cite{BeilinsonGinzburgSoergel96}. We recover this classical construction from ours by adding an extra grading on $Q$, namely, by putting any arrow $\alpha$ of $Q$ in bidegree $(-1,|\alpha|)$.}.

\smallskip

Let $\cd^{\leq 0}$ (respectively, $\cd^{\geq 0}$) be the smallest full subcategory of $\per(A)$ containing $A$ and closed under extensions, shift $\Sigma$ (respectively, negative shift $\Sigma^{-1}$) and direct summands. We thank Olaf M. Schn\"urer for pointing out that the following theorem generalises \cite[Theorem 39]{Schnuerer11}.

\begin{theorem}\label{t:heart}
The pair $(\cd^{\leq 0},\cd^{\geq 0})$ is a bounded $t$-structure on $\per(A)$  whose heart is equivalent to $\mod A^!$.
\end{theorem}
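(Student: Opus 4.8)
The plan is to realize $(\cd^{\leq 0},\cd^{\geq 0})$ as the $t$-structure attached to an explicit silting object via Theorem~\ref{t:koenigy} (or rather its analogue for $\per(A)$ and $\cd_{fd}(A)$ in the dg setting), and then identify the endomorphism algebra of that silting object with $A^!$. Concretely, since $A=kQ/(R)$ is positively graded with trivial differential, the grading equips $A$ with an extra $\Z$-action; the canonical $t$-structure of \cite{Schnuerer11},\cite{KellerNicolas13} on $\per(A)$ should be the one generated, in the sense of the theorem, by the object $A$ itself regarded as a silting object (note $\Hom_{\per(A)}(A,\Sigma^p A)=H^p(A)=0$ for $p>0$ because $A$ is concentrated in nonpositive cohomological degree — here the internal grading of the algebra is the cohomological degree after the usual sign convention, so the positivity of the grading on $Q$ is exactly what makes $A$ silting). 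First I would make this precise: spell out the bijection between silting objects in $\per(A)$ and bounded (algebraic) $t$-structures on $\per(A)$, check that $\cd^{\leq 0}$ as defined (smallest subcategory containing $A$, closed under extensions, $\Sigma$ and summands) is the aisle generated by $A$, and conclude it is a bounded $t$-structure with heart $\mod\,\End_{\per(A)}(A)^{op}$ — wait, more carefully, the heart is the category of modules over the endomorphism algebra of the \emph{simple-minded collection} dual to $A$, which is the Koszul-dual side.

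The second and main step is the computation of that endomorphism algebra, and this is where Koszul duality enters: the heart of the canonical $t$-structure on $\per(A)$ is, by the general theory, $\mod E$ where $E$ is the (completed) Koszul dual dg algebra $\RHom_A(A_0, A_0)$ with $A_0 = A/\rad A = \bigoplus_i k e_i$ the semisimple quotient. So I would compute $\RHom_A(A_0,A_0)$ as a dg algebra and show that its cohomology, or rather the relevant truncation/the algebra whose module category is the heart, is $A^!$ as defined in the paper. The key input is a minimal projective resolution of $A_0$ over $A$: its first two terms are governed by the arrows $Q_1$ and by the relations $R$, and the bimodule structure gives $\Ext^1_A(A_0,A_0) \cong D(\text{arrows})$ and $\Ext^2_A(A_0,A_0)$ receives the relations. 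Crucially, the internal grading must be tracked: an arrow $\alpha$ of degree $|\alpha|$ contributes to $\Ext^1$ in internal degree $-|\alpha|$; a degree-$2$ relation contributes to $\Ext^2$ in a bidegree that, after the diagonal identification, collapses the cohomological degree-$2$ part of $E$ coming from $R_2$ into the multiplicative relations of $A^!$, while relations of length $\geq 3$ (i.e. $R_{\geq 3}$) sit in internal degrees that make them irrelevant to the heart — they contribute only to higher $A_\infty$ structure that is killed upon passing to the module category. This explains the appearance of the complete path algebra of $(Q^1)^{op}$ modulo $\overline{(R_2^\perp)}$ and nothing else: only the degree-$1$ arrows and the degree-$2$ relations survive into the relevant part of the Koszul dual.

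In more detail, I would argue as follows. Let $Q^1$ be the degree-$1$ subquiver. The low-degree part of the Koszul dual is a quotient of the tensor algebra on $\Ext^1_A(A_0,A_0)\cong D(kQ^1_1)$, i.e. of $k(Q^1)^{op}$; the Yoneda product $\Ext^1\times\Ext^1\to\Ext^2$ has kernel exactly $R_2^\perp$ by the standard quadratic-duality computation (this is the content of the pairing $(V\ten V)\times(DV\ten DV)\to k$ written in the definition). Completion along the arrow ideal is forced because the Koszul dual of an algebra with relations of length $\geq 3$, or more simply because the internal grading on $E$ is not bounded and one must complete to get a well-behaved module category; alternatively the completion is needed to make $\per(A)$ and $\per(E^{op})$ genuinely dual under the (co)restriction functors $\RHom_A(A_0,-)$ and $-\lten_E A_0$. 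So $E$, as far as the heart is concerned, is Morita/derived-equivalent to $A^! = \widehat{k(Q^1)^{op}}/\overline{(R_2^\perp)}$, and the heart of $(\cd^{\leq 0},\cd^{\geq 0})$ is $\mod A^!$. Boundedness of the $t$-structure is automatic since $A$ generates $\per(A)$ as a thick subcategory and the heart (= extension closure of the dual simple-minded collection, i.e. of the $\Sigma$-shifts of the simple $A_0$-modules) generates it too. The main obstacle I anticipate is the bookkeeping of the two gradings (cohomological versus internal) through Koszul duality — in particular making rigorous the claim that $R_{\geq 3}$ contributes only to higher $A_\infty$ operations on $E$ that do not affect $\mod E$, and the claim that the completion on the $A^!$-side is precisely what is needed; one clean way to do this is to invoke directly \cite[Theorem 16]{Schnuerer11} and \cite[Theorem 7.1]{KellerNicolas13} for the existence and canonicity of the $t$-structure, and then only compute the $0$-th and (via the quadratic pairing) the relations of the endomorphism algebra of the associated silting/simple-minded object, citing standard properties of minimal resolutions for the rest.
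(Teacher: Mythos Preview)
Your overall strategy matches the paper's: invoke \cite{Schnuerer11} and \cite{KellerNicolas13} for the existence of the bounded $t$-structure, then identify the heart as $\mod$ of (the degree-zero cohomology of) the Koszul-dual dg algebra $\RHom_A(A_0,A_0)$. Where you diverge is in the execution, and there is one real confusion to flag.

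First, $A$ is \emph{not} a silting object here. With all arrows in positive (cohomological) degrees, $\Hom_{\per(A)}(A,\Sigma^pA)=A^p$ is typically nonzero for $p>0$; your appeal to a ``usual sign convention'' does not rescue this. Rather, the indecomposable summands $P_i=e_iA$ satisfy $\Hom(P_i,\Sigma^pP_j)=(e_jAe_i)^p=0$ for $p<0$ and are orthogonal bricks in degree $0$ --- they \emph{are} the simple-minded collection, not the object Hom-dual to one. Your self-correction (``wait, more carefully\ldots'') is headed in the right direction but stops short of this.

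Second, your Ext/Yoneda discussion conflates the classical (bigraded) Ext with $H^0$ of the dg endomorphism algebra. Classically $\Ext^1_A(A_0,A_0)\cong D(\text{arrows})$ and the Yoneda kernel into $\Ext^2$ is $R_2^\perp$, but in the present dg setting the duals of the degree-$1$ arrows land in $H^0(B)$, not $H^1(B)$, and $R_2^\perp$ arises as the image of the differential \emph{inside} degree $0$, not as a Yoneda-product relation between $\Ext^1$ and $\Ext^2$. The paper sidesteps this bookkeeping by working with an explicit model: it takes the cobar construction $A^*=\widehat{T}_K(D(\bar A[1]))$, observes that its degree-$0$ component is exactly $\widehat{k(Q^1)^{op}}$, and checks from the multiplication-induced differential that the image of $d$ in degree $0$ is the closed ideal generated by $R_2^\perp$ (elements of $D(V^2)$ have zero differential, while $D((V\otimes_K V)/R_2)\cong R_2^\perp$ maps isomorphically onto the relations). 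A quasi-isomorphism $A^*\to B=\RHom_A(S,S)$ is then quoted from \cite{SuHao18}, and \cite{KellerNicolas13} gives the heart as $\mod H^0(B)=\mod A^!$. This makes the irrelevance of $R_{\geq 3}$ transparent --- those relations live in $A^{\geq 3}$, hence dually in $(D\bar A[1])^{\leq -2}$, and never touch degree $0$ --- with no appeal to $A_\infty$-operations needed. Your plan can be made rigorous, but the cobar computation is the clean route and is what the paper does.
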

\begin{proof}
That $(\cd^{\leq 0},\cd^{\geq 0})$ is a bounded $t$-structure on $\per(A)$ follows from \cite[Theorem 16]{Schnuerer11} and \cite[Theorem 7.1]{KellerNicolas13}. Next we show that the heart is equivalent to $\mod A^!$.

Let $\bar{A}=\bigoplus_{p\geq 1}A^p$ and $K=kQ_0$. Then the multiplication of $A$ defines a graded $K$-bimodule structure on $\bar{A}$.
Consider the tensor algebra
\[T_K(\bar{A}[1])=K\oplus\bar{A}[1]\oplus \bar{A}[1]\ten_K \bar{A}[1]\oplus\ldots=\bigoplus_{p\geq 0} (\bar{A}[1])^{\ten_K p}.\]
The Koszul dual $A^*$ of $A$ is defined as
\begin{eqnarray}A^*:=\Hom_K(T_K (\bar{A}[1]),K)=\widehat{T}_K(\Hom_K(\bar{A}[1],K)),\label{construction:koszul-dual}\end{eqnarray}
where $\widehat{T}$ denotes the complete tensor algebra. 
This is a dg algebra whose multiplication is given by the tensor product and whose differential $d$ is induced by the multiplication of $\bar{A}$. Precisely, for an element $f$ in $\Hom_K(\bar{A}[1],K)$ and two homogeneous elements $a$ and $b$ in $\bar{A}[1]$, we have
\begin{eqnarray}d(f)(a\ten b)=f((-1)^{|a|}ab).\label{eq:differential}\end{eqnarray}

Observe that the degree $0$ component of $A^*$ is the complete path algebra $\widehat{k(Q^1)^{op}}$, and the intersection of the image of the differential $d$ of $A^*$ and the degree $0$ component is the closure of the ideal of $\widehat{k(Q^1)^{op}}$ generated by $d(f)$ for $f\in D(A^2[1])$, where $A^2$ is the degree $2$ component of $A$. Denote by $V^2$ the $k$-span of the arrows of $Q$ of degree $2$ and by abuse of notation denote by $R_2$ the $k$-span of $R_2$. Then $A^2=V^2\oplus (V\ten_K V)/R_2$. The formula (\ref{eq:differential}) implies that $d(f)=0$ for $f\in D(V^2)$ and for $f\in D((V\ten_K V)/R_2)$ the differential $d(f)$ equals the image of $f$ under the canonical isomorphism $D((V\ten_K V)/R_2)\cong R_2^\perp$. Therefore $H^0(A^*)=A^!$.

Let $\mathbf{p}S$ be the bar resolution of $A/\rad A$ and let $B$ be the dg endomorphism algebra of $\mathbf{p}S$. Then by \cite[Lemma 3.7]{SuHao18}, there is a quasi-isomorphism of dg algebras from $A^*$ to $B$. Going through the proof of \cite[Theorem 7.1]{KellerNicolas13}, we see that the heart of $(\cd^{\leq 0},\cd^{\geq 0})$ is equivalent to $\mod H^0(B)$, which is equivalent to $\mod H^0(A^*)=\mod A^!$. 
\end{proof}

\begin{remark}
\begin{itemize}
\item[(a)]
By the general result \cite[Proposition 3.1.10]{BeilinsonBernsteinDeligne82}, there is a triangle functor
\[
\mathrm{real}\colon \cd^b(\mod A^!)\rightarrow \per(A),
\]
which extends the equivalence between $\mod A^!$ and the heart.
\item[(b)]
If $A$ has no relations of degree $2$, then by definition any path of length $2$ is a relation of the quadratic dual $A^!$, and hence $A^!$ has radical square zero. See \cite{BautistaLiu06,BautistaLiu17,BekkertDrozd09,Yang18} for studies on derived categories of algebras with radical square zero.
\end{itemize}
\end{remark}

Let $Q$ be a gradable finite quiver (Section~\ref{ss:quivers}), $I$ an ideal of $kQ$ generated by relations of length at least $2$ and let $A=kQ/I$. Then $A$ can be graded such that all arrows of $Q$ are of degree $1$. Denote this graded algebra by $A^{gr}$ and the graded quiver by $Q^{gr}$. Let $A^!:=(A^{gr})^!$, which we call the \emph{quadratic dual} of $A$ by abuse of language.

Notice that under the assumption on $Q$ the algebra $A$ is finite-dimensional and has finite global dimension. Thus $\cd^b(\mod A)=\per(A)\simeq\ch^b(\proj A)$. Moreover, there is a set of integers
$\{t_i\mid i\in Q_0\}$ such that if there is a path from $i$ to $j$ of length $l$ then $t_i-t_j=l$. We fix such a set. For $i\in Q_0$, put $P_i=e_iA$, where $e_i$ is the trivial path at $i$. Let $\cd'^{\leq 0}$ (respectively, $\cd'^{\geq 0}$) be the smallest full subcategory of $\cd^b(\mod A)$ containing $\Sigma^{t_i}P_i$, $i\in Q_0$, and closed under extension, shift $\Sigma$ (respectively, negative shift $\Sigma^{-1}$) and direct summands.

\begin{theorem}\label{t:heart-2} Keep the above notation and assumptions. Then $(\cd'^{\leq 0},\cd'^{\geq 0})$ is a bounded $t$-structure on $\cd^b(\mod A)$ whose heart is equivalent to $\mod A^!$. If all relations in $I$ have length at least $3$, then $A^!$ has radical square zero and the heart is derived equivalent to $kQ$.
\end{theorem}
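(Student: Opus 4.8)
The statement has two parts. For the first part I would reduce it to Theorem~\ref{t:heart} by unwinding the grading. Since $Q$ is gradable and $I$ is generated by relations of length $\geq 2$, grading all arrows in degree $1$ produces a positively graded algebra $A^{gr}=kQ^{gr}/(R)$ with $R$ contained in the square of the radical, so Theorem~\ref{t:heart} applies and gives a bounded $t$-structure on $\per(A^{gr})$ with heart $\mod (A^{gr})^!=\mod A^!$. The point is to transfer this to $\cd^b(\mod A)$: forgetting the grading gives a triangle functor $\per(A^{gr})\to\cd(A)$, but rather than chase the grading directly I would instead identify the generators. The heart of the $t$-structure on $\per(A^{gr})$ is the extension closure of the (shifted) simple modules, and dually the silting object is built from the graded projectives $e_iA^{gr}$ placed in the appropriate cohomological degree; forgetting the grading sends $e_iA^{gr}$ to $P_i=e_iA$ but shifts it by $\Sigma^{t_i}$, where $t_i$ is exactly the integer in the statement satisfying $t_i-t_j=l$ along a path of length $l$ (this consistency is what gradability buys us, and it is well-defined up to an overall shift). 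Thus $\bigoplus_i\Sigma^{t_i}P_i$ is a silting object of $\cd^b(\mod A)=\per(A)\simeq\ch^b(\proj A)$, and by Theorem~\ref{t:koenigy} it determines an algebraic $t$-structure whose heart is $\mod\End(\bigoplus_i\Sigma^{t_i}P_i)^{op}$. The subcategories $(\cd'^{\leq 0},\cd'^{\geq 0})$ are by construction the aisle and coaisle generated by these objects, so they coincide with this $t$-structure, and its heart is equivalent to $\mod A^!$ by Theorem~\ref{t:heart} (the endomorphism algebra computation is identical on both sides since forgetting the grading is fully faithful on the relevant Hom-spaces).

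**The radical-square-zero claim.** For the second assertion, assume every relation in $I$ has length $\geq 3$, so $R_{\geq 3}=R$ and $R_2=\emptyset$. Then by the remark following Theorem~\ref{t:heart} (part (b) of the Remark), $R_2^\perp$ is all of $V\ten_K V$, so $A^!=\widehat{k(Q^1)^{op}}/\overline{(V\ten_K V)}$, which is the algebra with underlying quiver $(Q^{op})$ (all arrows of $Q$ survive since they are all in degree $1$ here) modulo the square of the radical. Hence $A^!$ has radical square zero with the same number of vertices and arrows as $Q$ — in fact its quiver is $Q^{op}$.

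**Derived equivalence with $kQ$.** This is the part I expect to be the real obstacle. The radical-square-zero algebra $B=A^!$ with quiver $Q^{op}$ should be derived equivalent to $kQ$ (equivalently to $kQ^{op}$, since $Q$ is gradable hence acyclic and a quiver is always derived equivalent to its opposite). The natural strategy: produce an explicit tilting complex over $B$ whose endomorphism algebra is $kQ^{op}$. For a radical-square-zero algebra $B=kQ^{op}/(\rad)^2$ one has for each vertex $i$ a two-term projective complex $P_i^B\to\bigoplus_{\alpha:i\to j}P_j^B$ (the start of the minimal projective resolution of the simple $S_i^B$), and the appropriate combination of these — shifted according to the same integers $t_i$ and assembled using that $Q$ has no oriented cycles so one can peel off sources/sinks — yields a tilting complex. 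Computing $\Hom$-spaces between these complexes and checking the no-higher-self-extensions condition, then identifying the endomorphism algebra with $kQ^{op}\cong kQ$, is where the work lies; alternatively one can invoke the known results on derived equivalences for radical-square-zero algebras (the references \cite{BautistaLiu17,BekkertDrozd09,Yang18} cited in the Remark treat precisely derived categories of such algebras, and the acyclic "separated" case is classical), under which the radical-square-zero algebra of an acyclic quiver $\Delta$ is derived equivalent to the hereditary algebra $k\Delta$. I would cite this rather than reprove it. The one genuinely new ingredient is matching up the cohomological shifts $t_i$ so that the composite equivalence $\cd^b(\mod A)\simeq\per(A^{gr})\supseteq$ heart $\simeq\mod A^!\hookrightarrow\cd^b(\mod A^!)\simeq\cd^b(\mod kQ)$ is consistent; this is bookkeeping that follows from gradability, and I would present it as such.
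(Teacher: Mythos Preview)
Your reduction to Theorem~\ref{t:heart} via a triangle equivalence $\per(A^{gr})\simeq\cd^b(\mod A)$ is exactly the paper's approach, and your treatment of the radical-square-zero claim and of the derived equivalence with $kQ$ (by citing the literature on such algebras; the paper invokes \cite[Corollary~3.14]{BautistaLiu06}) is correct. Your final remark about ``matching up the shifts $t_i$'' is unnecessary: the second statement only asserts a derived equivalence between $A^!$ and $kQ$ as abstract algebras, with no reference to the embedding into $\cd^b(\mod A)$.

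There is, however, a genuine error in your first part. The object $\bigoplus_i\Sigma^{t_i}P_i$ is \emph{not} silting: if there is an arrow $i\to j$ then $t_i-t_j=1$ and $\Hom(\Sigma^{t_i}P_i,\Sigma^{t_j+1}P_j)=e_jAe_i\neq 0$, so the positive-degree vanishing fails. Consequently your formula ``heart $=\mod\End(\bigoplus_i\Sigma^{t_i}P_i)\op$'' is false---that endomorphism algebra is only the semisimple algebra $k^{Q_0}$. What is true is that $\{\Sigma^{t_i}P_i\}$ is a \emph{simple-minded collection} (this is how the paper's alternative argument, Theorem~\ref{t:hereditary-heart}, proceeds); equivalently, on the graded side $A^{gr}$ is not silting in $\per(A^{gr})$ either, since $\Hom(A^{gr},\Sigma^pA^{gr})=(A^{gr})^p\neq 0$ for $p>0$. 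The step you are glossing over with ``forgetting the grading'' is the construction of the \emph{tilting} object that actually produces the equivalence: it is $\bigoplus_i\Sigma^{-t_i}P_i^{gr}\in\per(A^{gr})$, whose endomorphism algebra is $A$ precisely because gradability forces every path from $i$ to $j$ to have length $t_i-t_j$. Under the resulting equivalence one has $P_i^{gr}\mapsto\Sigma^{t_i}P_i$, so $(\cd^{\leq 0},\cd^{\geq 0})$ transports to $(\cd'^{\leq 0},\cd'^{\geq 0})$ and Theorem~\ref{t:heart} then identifies the heart.
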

\begin{proof}
For $i\in Q_0$, put $P_i^{gr}=e_iA^{gr}$. Then $\bigoplus_{i\in Q_0}\Sigma^{-t_i}P_i^{gr}$ is a tilting object of $\per(A^{gr})$ whose endomorphism algebra is isomorphic to $A$.
Therefore $\per(A)=\cd^b(\mod A)$ is triangle equivalent to $\per(A^{gr})$. Under this equivalence the pair $(\cd'^{\leq 0},\cd'^{\geq 0})$ is sent to the bounded $t$-structure $(\cd^{\leq 0},\cd^{\geq 0})$ in Theorem~\ref{t:heart} applied to $A^{gr}$. We obtain the first statement.

If all relations in $I$ have length at least $3$, then $A^!$ is the radical square zero algebra given by the quiver $Q^{op}$. By \cite[Corollary 3.14]{BautistaLiu06}, $A^!$ is derived equivalent to $kQ$. 
\end{proof}

\subsection{An alternative proof of the second statement of Theorem~\ref{t:heart-2}}

The second statement of Theorem~\ref{t:heart-2} is the result we will mainly use to construct $t$-structures in the next section. Here we state it separately and give an alternative proof. Moreover, we give a necessary condition under which the embedding from the heart extends to a derived equivalence.

Let $Q$ be a finite gradable quiver and $A=kQ/I$ with $I$ consisting of relations of length at least $3$. Let $\{t_i\mid i\in Q_0\}$ and $(\cd'^{\leq 0},\cd'^{\geq 0})$ be as in the preceding subsection.

\begin{theorem}\label{t:hereditary-heart}
The pair $(\cd'^{\leq 0},\cd'^{\geq 0})$ is a bounded $t$-structure on $\cd^b(\mod A)$
whose heart $\cb$ is derived equivalent to $kQ$. Moreover, the embedding $\cb\to\cd^b(\mod A)$ extends to a derived equivalence $\cd^b(\cb)\to\cd^b(\mod A)$ unless $I=0$.
\end{theorem}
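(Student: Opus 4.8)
The plan is to establish the two assertions separately: the $t$-structure statement via silting theory and Koenig--Yang, and the extension statement via a formality/Koszulity obstruction, whose conclusion is that the realization functor is an equivalence precisely in the degenerate case $I=0$ (so that for $I\neq 0$ the embedding does not extend, which is the phenomenon we want). Since $Q$ is gradable, $A$ has finite global dimension and $\cd^b(\mod A)=\per(A)=\ch^b(\proj A)$ is Krull--Schmidt, so Theorem~\ref{t:koenigy} is available. For the first assertion I would realize $(\cd'^{\le 0},\cd'^{\ge 0})$ as the algebraic $t$-structure attached to a silting object of $\ch^b(\proj A)$ with aisle generated by the shifted projectives $\Sigma^{t_i}P_i$: the defining relation of the $t_i$ (a path $i\to j$ of length $l$ forces $t_i-t_j=l$) is exactly what yields the silting Hom-vanishing, and tracking path lengths identifies the relevant endomorphism algebra with $A^!=kQ^{op}/\rad^2$. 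This is the ungraded image of the canonical $t$-structure of Theorem~\ref{t:heart} under the graded tilting equivalence $\per(A^{gr})\simeq\cd^b(\mod A)$, and $\cb\simeq\mod A^!$ is derived equivalent to $kQ$ by \cite[Corollary 3.14]{BautistaLiu06}.

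For the second assertion the object of study is the realization functor $\mathrm{real}\colon\cd^b(\cb)\to\cd^b(\mod A)$. Because $\cb\simeq\mod A^!$, this is a triangle functor out of $\cd^b(\mod A^!)$, and it is an equivalence if and only if the graded self-extension algebra of the simple objects $\{X_i\}$ of $\cb$, computed in $\cd^b(\mod A)$, agrees as a dg (equivalently $A_\infty$) algebra with the Yoneda algebra $\Ext^*_{A^!}(\mathrm{simples})$. Equivalently, the Koszul-dual dg algebra $A^*$ of the proof of Theorem~\ref{t:heart} --- quasi-isomorphic to $\RHom_A(A/\rad A,A/\rad A)$ and with $H^0(A^*)=A^!$ --- must be formal and concentrated on the Koszul diagonal, i.e. quasi-isomorphic to the ordinary algebra $A^!$. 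This is precisely the statement that the positively graded algebra $A^{gr}$ is Koszul.

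The dependence on $I$ now enters. A positively graded algebra all of whose relations have length $\ge 3$, and which has no relation in degree $2$, is quadratic only if it has no relations at all; hence $A^{gr}$ can be Koszul only when $I=0$. Concretely, each relation of length $\ge 3$ contributes a class to $\Ext^2_A(A/\rad A,A/\rad A)$ of internal degree $\ge 3$, lying off the Koszul diagonal, and this obstructs the quasi-isomorphism $A^*\simeq A^!$ as soon as $I\neq 0$. Conversely, when $I=0$ the algebra $A=kQ$ is hereditary, hence Koszul, so $A^*$ is formal and equal to $A^!=kQ^{op}/\rad^2$, and classical Koszul duality makes $\mathrm{real}$ an equivalence. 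Thus the embedding extends to a derived equivalence exactly when $I=0$. As an independent check for $I\neq 0$: an extension would give $\cd^b(\mod A)\simeq\cd^b(\cb)\simeq\cd^b(\mod kQ)$, i.e. $A$ derived equivalent to $kQ$; but for $A=\Lambda(n,n-1)$ one has $A$ of Dynkin type $\mathbb D_n$ while $kQ=k\mathbb A_n$, and these are not derived equivalent.

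The step I expect to be the main obstacle is the formality reformulation, because it rests on the interplay between the cohomological grading and the internal (Adams) grading. One must check that $H^0(A^*)=A^!$ genuinely uses the shifted grading in which the arrows sit in degree $0$, and that ``formal and diagonal'' is equivalent to Koszulity of $A^{gr}$, so that the non-vanishing of the off-diagonal $\Ext^2$-classes is equivalent to $I\neq 0$ and not merely to some weaker failure of the realization. Pinning down this grading bookkeeping is exactly what makes the dichotomy in $I$ sharp.
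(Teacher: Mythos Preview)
Your proposal is correct in outline but takes a different, more abstract route than the paper. For the first assertion you essentially invoke the graded--tilting argument of Theorem~\ref{t:heart-2} and then cite Bautista--Liu; the paper instead gives a genuinely alternative, elementary computation: it identifies $\{\Sigma^{t_i}P_i\}$ as a simple-minded collection, passes via Hom-duality to the silting object $M=\bigoplus_i \Sigma^{t_i}\nu^{-1}S_i$, and computes $\End(M)$ by hand using Bongartz's bound (a nonzero $\Ext^m$ between simples forces a path of length at least $\lfloor (3m-1)/2\rfloor$ since $I\subseteq J^3$), which kills all contributions with $t_j-t_i\ge 2$ and yields $\End(M)\cong kQ^{op}/(\text{arrows})^2$ directly.

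For the second assertion your Koszulity/formality packaging is valid but heavier than needed. The paper argues in two strokes. First, it shows $M$ is tilting iff $I=0$: for $I=0$ heredity gives the vanishing; for $I\neq 0$ a minimal relation $j\to i$ of length $l\ge 3$ produces $\Hom(\Sigma^{t_i}S_i,\Sigma^{2-l}\Sigma^{t_j}S_j)=\Ext^2_A(S_i,S_j)\neq 0$ again by Bongartz, so $M$ is strictly silting. This already recovers the derived equivalence $kQ\simeq kQ^{op}/\rad^2$ without citing \cite{BautistaLiu06}. Second, for the non-extension it simply compares Yoneda algebras of the simple objects: in $\cd^b(\mod A^!)$ this is $kQ^{gr}$, while in $\cd^b(\mod A)$ it is $A^{gr}$; any triangle equivalence extending the embedding would force these graded algebras to coincide, which fails exactly when $I\neq 0$. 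This is the same off-diagonal $\Ext^2$ obstruction you identify, but phrased without $A_\infty$ or formality language, and it applies to \emph{any} extension, not just the realization functor (a point your focus on $\mathrm{real}$ leaves slightly implicit). Your independent check via $\Lambda(n,n-1)$ is a nice sanity test but, as you note, is example-specific; the paper's Yoneda comparison handles all $I\neq 0$ uniformly.
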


\begin{proof}
We will find a silting object in $\cd^b(\mod A)\simeq \ch^b(\proj A)$ whose endomorphism algebra
is isomorphic to $kQ^{op}/(arrows)^2$ and show that this radical square zero algebra is derived equivalent to $kQ$. By applying Theorem~\ref{t:koenigy},
we obtain the first statement.

\smallskip

The collection
$\{\Sigma^{t_1}P_1,\ldots,\Sigma^{t_n}P_n\}$ is simple-minded.
Indeed, the graded endomorphism algebra
$$\bigoplus_{m\in\mathbb{Z}}\Hom(\bigoplus_{i\in Q_0}\Sigma^{t_i}P_i,\Sigma^m
\bigoplus_{i\in Q_0}\Sigma^{t_i}P_i)$$ 
is isomorphic to $A^{gr}$,  the graded
algebra $A=kQ/I$ with each arrow in degree $1$. The pair $(\cd'^{\leq 0},\cd'^{\geq 0})$ is exactly the bounded $t$-structure corresponding to this simple-minded collection. 
By the Hom-duality
(Theorem~\ref{t:koenigy}),
the silting object corresponding to this simple-minded collection is
$M=\bigoplus_{i\in Q_0}\Sigma^{t_i}\nu^{-1}S_i$, where $\nu$ is the derived Nakayama functor and $S_i$ is the simple top of $P_i$ ($i\in Q_0$).  Therefore the heart of $(\cd'^{\leq 0},\cd'^{\geq 0})$ is equivalent to $\mod\End(M)$.

Next we compute the endomorphism algebra of $M$, which is isomorphic to the endomorphism algebra of $\nu(M)=\bigoplus_{i\in Q_0}\Sigma^{t_i}S_i$ because in this case $\nu$ is a triangle equivalence. For $i,j\in Q_0$,
there are isomorphisms
\[\Hom(\Sigma^{t_i}S_i,\Sigma^{t_j}S_j)=\Hom(S_i,\Sigma^{t_j-t_i}S_j)=\Ext^{t_j-t_i}_A(S_i,S_j).\]
If there is an arrow from $j$ to $i$ in $Q$, then $t_j-t_i=1$. In this case,
the dimension of $\Ext^1_\Lambda(S_i,S_j)$ is the number of arrows from $j$ to $i$.
Suppose $t_j-t_i\geq 2$ and $\Ext_\Lambda^{t_j-t_i}(S_i,S_j)\neq 0$. Then it follows from~\cite[Theorem 1.1]{Bongartz83} that there is a path from $j$ to $i$ of length
at least the integer part of $\frac{3(t_j-t_i)-1}{2}$, since $I\subseteq J^3$, where $J$ is the ideal of $kQ$ generated by all arrows. This contradicts the fact that a path from $j$ to $i$ is of length $t_j-t_i$.
Therefore, if $t_j-t_i\geq 2$ then $\Ext_\Lambda^{t_j-t_i}(S_i,S_j)= 0$, and hence
$\End(M)\cong kQ^{op}/(arrows)^2$.

Finally, we claim that $M$ is a tilting object (equivalently, $\nu(M)$ is a tilting object) if and only if $I=0$. This implies that $kQ$ is derived equivalent to $kQ^{op}/(arrows)^2$.
Suppose $I=0$. Then $A$ is hereditary, and hence
$\Ext^m_A(S_i,S_j)=0$ for any $m\geq 2$ and for any $i,j\in Q_0$. Moreover, $\Ext^1_A(S_i,S_j)\neq 0$ if and only if there are
arrows from $j$ to $i$ in $Q$. Therefore
\[\Hom(\Sigma^{t_i}S_i,\Sigma^m\Sigma^{t_j}S_j)=\Ext_A^{m+t_j-t_i}(S_i,S_j)=0\]
unless $m=0$.
Conversely, suppose $I\neq 0$, and there is a minimal relation, say, from $j$ to $i$ of length $l$ ($l\geq 3$ by the assumption on $I$).
Then $t_j-t_i=l$, and
\[\Hom(\Sigma^{t_i}S_i,\Sigma^{2-l}\Sigma^{t_j}S_j)=\Ext_A^{2-l+t_j-t_i}(S_i,S_j)=\Ext^2_A(S_i,S_j),\]
which does not vanish by~\cite[Theorem 1.1]{Bongartz83}.
Therefore $M$ is not a tilting object.

Now assume $I\neq 0$. It is known that the Yoneda algebra of $kQ^{op}/(arrows)^2$ is $kQ^{gr}$, where $Q^{gr}$ is the graded quiver obtained from $Q$ by putting all arrows in degree $1$ (this can be obtained by direct computation or by general theory \cite[Theorem 2.10.1]{BeilinsonGinzburgSoergel96}. Recall from the second paragraph above that $\Sigma^{t_1}P_1,\ldots,\Sigma^{t_n}P_n$ is a complete set of pairwise non-isomorphic simple objects of $\cb$ and the corresponding Yoneda algebra in $\cd^b(\mod A)$ is $A^{gr}$. Therefore the embedding $\cb\to\cd^b(\mod A)$ cannot be extended to a triangle equivalence.
\end{proof}

\section{Examples}\label{s:example}

Let $A$ be a finite-dimensional $k$-algebra and $(\cd^{\leq 0},\cd^{\geq 0})$ be an algebraic $t$-structure on $\cd^b(\mod A)$. Then, according to Theorem~\ref{t:koenigy}, the heart is equivalent to $\mod B$ for some finite-dimensional algebra $B$. In particular, $\mod B$ embeds into $\cd^b(\mod A)$. We compare $\mod B$ with $\mod A$. The following  questions are natural.

\begin{itemize}
\item[(a)] Let $Q$ be a connected Dynkin quiver and $A=kQ$. Is it true that $B$ is either derived equivalent to $A$ or $B$ is not connected? This question was proposed by Adam-Christiaan van Roosmalen.
\item[(b)] Is $B$ less complicated than $A$ in terms of derived representation type?
\item[(c)] If $A$ has finite global dimension, does $B$ have finite global dimension?
\item[(d)] If $A$ is piecewise hereditary, is $B$ piecewise hereditary? Thanks to \cite{HappelZacharia08}, this is equivalent to: if $A$ has finite strong global dimension, does $B$ have finite strong global dimension? This question was proposed by Jorge Vit\'oria.
\end{itemize}

In \cite{BuanZhou18}, Buan and Zhou showed that the question (c) in general has a negative answer. In this section, we construct concrete examples showing that the questions (a), (b) and (d) in general have negative answers. We also give a family of examples in which $A$ and $B$ are derived equivalent, while the embedding $\mod B\hookrightarrow \cd^b(\mod A)$ induced by the given $t$-structure does not extend to a derived equivalence.

\subsection{Non-broken hearts}
We study question (a) for general connected quivers without oriented cycles. 
A necessary and sufficient condition for the heart of a bounded $t$-structure on $\cd^b(\mod kQ)$ to be derived equivalent to $kQ$ is given in \cite{StanleyvanRoosmalen16}.

\smallskip
Adam-Christiaan van Roosmalen informed us that it has a positive answer for quivers of type $\mathbb{A}$ and for $n$-Kronecker quivers. In both cases, silting objects of $\ch^b(\proj A)$ are of the form $M=\Sigma^{p(1)}T_1\oplus\ldots\oplus \Sigma^{p(n)}T_n$, where $T_1\oplus\ldots\oplus T_n$ is a tilting complex with $\Hom(T_j,T_i)=0$ for $j>i$, and $p\colon\{1,\ldots,n\}\to \mathbb{N}\cup\{0\}$ is a non-decreasing function with $p(1)=0$ (for type $\mathbb{A}$ this is \cite[Theorem 5.3]{KellerVossieck88}; for $n$-Kronecker quivers, this follows from \cite[Proposition 3.11]{AiharaIyama12} and \cite[Lemma 4]{Ringel94}). If $p$ is constant, then $M$ is a tilting complex, and $\End(M)$ is derived equivalent to $A$. If $p$ is not constant, then there exists $i_0\in\{1,\ldots,n\}$ such that $p(i_0)<p(i_0+1)$. It follows that $\End(M)$ is not connected, since $\Hom(\Sigma^{p(1)}T_1\oplus\ldots\oplus \Sigma^{p(i_0)}T_{i_0},\Sigma^{p(i_0+1)}T_{i_0+1}\oplus\ldots\oplus \Sigma^{p(n)}T_n)=0$. For $n$-Kronecker quivers the conclusion that question (a) has a positive answer can also be obtained as a consequence of the classification of bounded $t$-structures (\cite[Theorem 6.7]{GorodentsevKuleshovRudakov04} and \cite[Theorem 8.13]{StanleyvanRoosmalen19}).

\smallskip
The two examples below show that question (a) has a negative answer for quivers of type $\mathbb{D}_n$ ($n\geq 4$), of tree type $\mathbb{T}_{23(n-3)}$ and $\mathbb{T}_{2^{m+1},n-2}$ and of type $\tilde{\mathbb{A}}_n$ ($n\geq 2$). 
Notice that $\mathbb{E}_6=\mathbb{T}_{233}$, $\mathbb{E}_7=\mathbb{T}_{234}$ and $\mathbb{E}_8=\mathbb{T}_{235}$.

\begin{example}\label{e:a-n}
Let $\Lambda(n,r)$ ($n>r\geq 3$) and $\Lambda'_m(n,n-1)$ 
be as in Section~\ref{s:ADE}. Applying
Theorem~\ref{t:hereditary-heart} to $\Lambda'_m(n,n-1)$ and the
piecewise hereditary $\Lambda(n,r)$'s yields the following: 
\begin{itemize}
\item[--] Let $A$ be piecewise hereditary of type $\mathbb{D}_n$ ($n\geq 4$). Then
$\cd^b(\mod A)$ admits a bounded $t$-structure whose heart is
piecewise hereditary of type $\mathbb{A}_n$.
\item[--]
Let $A$ be piecewise hereditary of type
 $\mathbb{T}_{23(n-3)}$ ($n\geq 6$). Then
$\cd^b(\mod A)$ admits a bounded $t$-structure whose heart is
piecewise hereditary of type $\mathbb{A}_n$.
\item[--]
Let $A$ be piecewise hereditary of type
 $\mathbb{X}_{23(n-4)}$ ($n\geq 7$). Then
$\cd^b(\mod A)$ admits a bounded $t$-structure whose heart is
piecewise hereditary of type $\mathbb{A}_n$.
\item[--]
Let $A$ be piecewise hereditary of type
$\mathbb{X}_{244}$. Then
$\cd^b(\mod A)$ admits a bounded $t$-structure whose heart is
piecewise hereditary of type $\mathbb{A}_9$.
\item[--] Let $A$ be piecewise hereditary of type $\mathbb{X}_{245}$. Then
$\cd^b(\mod A)$ admits a bounded $t$-structure whose heart is
piecewise hereditary of type $\mathbb{A}_{10}$.
\item[--] Let $A$ be piecewise hereditary
of type $\mathbb{T}_{2,2,2,n-2}$. Then $\cd^b(\mod A)$ admits a
bounded $t$-structure whose heart is piecewise hereditary of type
$\mathbb{D}_n$. This follows from Proposition~\ref{p:extend-ADE}. More generally, let $A$ be piecewise hereditary of type $\mathbb{T}_{2^{m+1},n-2}$. Then $\cd^b(\mod A)$ admits a bounded $t$-structure whose heart is piecewise hereditary of type $\mathbb{T}_{2^m,n-1}$.
\end{itemize}
\end{example}

\begin{example}\label{e:a-n-2} Let $p,q\in\mathbb{N}$.
Let $A$ be the path algebra of the quiver $Q$
\[\xymatrix@R=0.7pc@C=0.7pc{&p+q\ar[dl]_{\alpha}&\cdot\ar[l]\ar@{.}[r]&\cdot&p+2\ar[l]&\\
1&&&&&p+1\ar[dl]\ar[ul].\\
&2\ar[ul]&\cdot\ar[l]\ar@{.}[r]&\cdot&p\ar[l]&}\]
Let $P_1,\ldots,P_{p+q}$ be the indecomposable projective modules corresponding to the vertices $1,\ldots,p+q$.
For $i=1,\ldots,p+q$, let
\[M_i=\begin{cases}
\mathrm{Cone}(P_{p+q}\stackrel{\alpha}{\longrightarrow}P_1) & \text{ if } i=1\\
P_i & \text{ if } 2\leq i\leq p+q-1\\
\Sigma P_{p+q} & \text{ if } i=p+q.
\end{cases}\]
Then $M=\bigoplus_{i=1}^{p+q}M_i$ is a silting object in
$\ch^b(\proj A)$. Indeed, direct computation shows that the
graded endomorphism algebra
$\bigoplus_{m\in\mathbb{Z}}\Hom(M,\Sigma^m M)$ is isomorphic to the
quotient of the path algebra of the graded quiver $Q'$
\[\xymatrix@R=0.7pc@C=0.7pc{&p+q&p+q-1\ar[l]_(0.6){\delta}&\cdot\ar[l]\ar@{.}[r]&\cdot&p+2\ar[l]&\\
1\ar[ru]^{\beta}&&&&&&p+1\ar[dl]\ar[ul].\\
&2\ar[ul]^{\gamma}&\cdot\ar[l]\ar@{.}[rr]&&\cdot&p\ar[l]&}\] modulo
$\beta\gamma$, where $\delta$ is of degree $-1$ and all other arrows
are of degree $0$. Note that $Q'$ differs from $Q$ by two arrows:
the arrow $\alpha$ from $p+q$ to $1$ is reversed and the degree of
the arrow from $p+q-1$ to $p+q$ decreases by $1$. The
endomorphism algebra $B=\End(M)$, which is the degree $0$ component of
the graded endomorphism algebra, is the quotient of the path algebra of the quiver
\[\xymatrix@R=0.7pc@C=0.7pc{&p+q&p+q-1&\cdot\ar[l]\ar@{.}[r]&\cdot&p+2\ar[l]&\\
1\ar[ru]^{\beta}&&&&&&p+1\ar[dl]\ar[ul].\\
&2\ar[ul]^{\gamma}&\cdot\ar[l]\ar@{.}[rr]&&\cdot&p\ar[l]&}\]
(obtained from $Q'$ by deleting $\delta$) modulo $\beta\gamma$. By
Theorem~\ref{t:koenigy}, there is a bounded $t$-structure whose heart is naturally equivalent to $\mod B$. Notice that $B$ is
connected, and is derived equivalent to the path algebra of any
quiver of type $\mathbb{A}_{p+q}$.
\end{example}

\subsection{Derived representation type}
It is well-known that quivers of different type are not derived equivalent. In this subsection we will see that quivers of different type sometimes are related via $t$-structures. In some sense an affine quiver can be found inside the derived category of a Dynkin quiver and a wild quiver can be found inside the derived category of an affine quiver. In particular, the general answer to question (b) is negative: $B$ can be derived representation-infinite even if $A$ is derived representation-finite; $B$ can be derived wild even if $A$ is derived tame.

\begin{example}\label{e:e-6}
Let $Q'$ be a quiver of type $\mathbb{E}_6$. Let $A_1$ be the quotient of the path algebra of the quiver
\[
\xymatrix@R=1pc@C=1pc{
& \cdot &\cdot\ar[d]& \\
\cdot\ar[r] & \cdot\ar[u]\ar[r] & \cdot\ar[r] & \cdot} 
\] 
modulo the unique path of length $3$ and let $A_2$ be the quotient of the path algebra of the quiver
\[
\xymatrix@R=0.6pc@C=1pc{&\cdot\ar[r]&\cdot\ar[rd]&\\
\cdot\ar[ru]\ar[rd]&&&\cdot\\
&\cdot\ar[r]&\cdot\ar[ru]&}
\]
modulo the sum of the two paths of length $3$. They are
tilted algebras of type $\mathbb{E}_6$, see~\cite[Section
4.2]{HappelRingel80}. So both $A_1$ and $A_2$ are
derived equivalent to $kQ'$.

Note that the quiver for $A_1$ is affine of type $\tilde{
\mathbb{D}}_5$ and that for $A_2$ is affine of type $\tilde{
\mathbb{A}}_{3,3}$. Then by Theorem~\ref{t:hereditary-heart} there
are two bounded $t$-structures on $\cd^b(\mod kQ')$ whose hearts are
derived equivalent to a quiver of type $\tilde{ \mathbb{D}}_5$ and a
quiver of type $\tilde {\mathbb{A}}_{3,3}$, respectively.
\end{example}

\begin{example}\label{e:happel-vossieck-list}
Going through the Happel--Vossieck list~\cite{HappelVossieck83}, we
obtain examples similar to those in Example~\ref{e:e-6}. This time
we obtain wild quivers from affine ones. For example, let $Q$ be the
wild quiver
\[\xymatrix@R=1pc@C=1.5pc{\cdot\ar[r]&\cdot\ar[r]&\cdot\ar[d]^\beta&\cdot\ar[l]_\alpha\\
\cdot\ar[r]&\cdot\ar[r]&\cdot\ar[r]^\gamma&\cdot}\] and let
$A$ be the quotient of $kQ$ by the ideal generated by
$\gamma\beta\alpha$. Then $A$ is a tilted algebra of type
$\tilde{\mathbb{E}}_7$. So, in the bounded derived category of a
quiver of type $\tilde{\mathbb{E}}_7$ there is a bounded
$t$-structure whose heart is derived equivalent to $kQ$.
\end{example}

\subsection{Global dimension}
If $A$ is hereditary, then $B$ necessarily has finite global dimension. Indeed, in this case, the indecomposable direct summands of any basic silting object form an exceptional sequence, by \cite[Proposition 3.11]{AiharaIyama12}. As a consequence, the quiver of $B$ (as the endomorphism algebra of a silting object) is directed and hence $B$ has finite global dimension. However, the general answer to question (c) is negative: Buan and Zhou give examples in \cite[Section 5.3]{BuanZhou18} to show that once the global dimension of $A$ increases to $3$, $B$ may have infinite global dimension. 

\subsection{Strong global dimension}

This example shows that the heart of a bounded $t$-structure on the bounded derived category of a piecewise hereditary algebra is not necessarily piecewise hereditary.

\begin{example}\label{ex:strong-global-dimension}

Let $Q$ be the quiver
\[\xymatrix@C=1pc@R=1pc{&&2\ar[rrr]^{\beta}&&&3\ar[rrd]^{\alpha}\\
1\ar[rrd]_\eta\ar[rru]^\gamma&&&&&&&6\\
&&4\ar[rrr]_\xi&&&5\ar[rru]_\delta}\]
and let $A$ be the quotient of $kQ$ modulo the ideal generated by $\beta\gamma$ and $\delta\xi\eta$. For a vertex $i$, let $S_i$ denote the simple $A$-module supported at $i$ and $P_i$ denote its projective cover. Let   $T=\bigoplus_{i=1}^6 T_i$, where
\begin{eqnarray*}
T_1 &=& S_4,\\
T_2 &=& \ker(P_3\stackrel{\alpha}{\longrightarrow}P_6),\\
T_3 &=& \cok(P_1\stackrel{{\gamma\choose\xi\eta}}{\longrightarrow}P_2\oplus P_5),\\
T_4 &=& S_2,\\
T_5 &=& P_3,\\
T_6 &=& P_6.
\end{eqnarray*}
Direct calculation shows that $T$ is a tilting $A$-module and its endomorphism algebra is the path algebra $A'$ of the quiver $Q'$
\[\xymatrix@C=1pc@R=1pc{&&&1\ar[d]&&&\\
&&&2\ar[rrrd]&&&\\
3\ar[rrru]\ar[rrd]&&&&&&6\\
&&4\ar[rr]&&5\ar[rru]&&}\]
In particular, $A'$ is piecewise hereditary.

Applying Theorem~\ref{t:heart-2} to $A$, we obtain a bounded $t$-structure on $\cd^b(\mod A)$ whose heart is equivalent to $\mod A^!$. By definition $A^!$ is the quotient of the path algebra of the quiver
\[\xymatrix@C=1pc@R=1pc{&&2\ar[lld]_{\gamma^*}&&&3\ar[lll]_{\beta^*}\\
1&&&&&&&6\ar[llu]_{\alpha^*}\ar[lld]^{\delta^*}\\
&&4\ar[llu]^{\eta^*}&&&5\ar[lll]^{\xi^*}}\]
modulo all paths of length $2$ other than $\gamma^*\beta^*$. Therefore $A^!$ is a gentle one-cycle algebra, and the Auslander--Reiten quiver of $\cd^b(\mod A^!)$ has three connected components, two of which have shape $\mathbb{Z}A_\infty$ and one of which has shape $\mathbb{Z}A_\infty^\infty$, by~\cite[Section 7]{Avella-AlaminosGeiss08} and~\cite[Theorem B]{BobinskiGeissSkowronski04}. It follows that $A^!$ is not piecewise hereditary since the Auslander--Reiten quiver of the bounded derived category of a hereditary category either is connected or has infinitely many connected components.

\smallskip
We remark that the derived equivalence between $A$ and $A'$ can also be obtained by quiver mutation, see Appendix~\ref{a:quiver-mutation}.
\end{example}

\subsection{Further examples}
Martin Kalck suggests the following example.

\begin{example}\label{e:canonical-alg}
Let $p\geq 3$, $r\geq 2$ and let $\mathbb{X}=\mathbb{X}_{p^r}$ be a weighted
projective line of type $(p^r)$. It follows
from~\cite[Section 4]{GeigleLenzing87} that $\mathbb{X}$ is derived
equivalent to a canonical algebra $\Lambda$ of type $(p,\ldots,p)$
($r$ copies), whose quiver is
\[\xymatrix@R=0.2pc@C=1pc{&&\cdot\ar@{.}[r]&\cdot\ar@{.}[r]&\cdot\ar[dr]&\\
Q_{p^r}\ar@{=}[r]&\cdot\ar[ur]\ar[dr]&&\vdots&&\cdot\\
&&\cdot\ar@{.}[r]&\cdot\ar@{.}[r]&\cdot\ar[ur]&}\] (there are $r$
arms all of which consist of $p$ arrows). It satisfies a relation that is a
linear combination of the longest paths. By
Theorem~\ref{t:hereditary-heart}, $\cd^b(\coh\mathbb{X})$ admits a
bounded $t$-structure whose heart is derived equivalent to
$kQ_{p^r}$. This shows that $\coh\mathbb{X}$ and $\mod kQ_{p^r}$ are
closely related, which may suggest some interesting connection between
their Hall algebras.
\end{example}

In 2014, Martin Kalck observed the following surprising phenomenon\footnote{We are grateful to Martin Kalck for the permission to include this example.}: Let $A$ be the quotient of the path algebra of the quiver
\[
\xymatrix{
1 \ar[r]^\alpha& 2 \ar@<.7ex>[r]^\gamma & 3\ar@<.7ex>[l]^\beta
}
\]
modulo the two paths $\gamma\beta$ and $\beta\gamma\alpha$. Let $M=P_1\oplus P_2\oplus \Cone(P_3\stackrel{\beta}{\to}P_2)$, where $P_1,P_2,P_3$ are the indecomposable projetcive $A$-modules corresponding to the vertices $1,2,3$, respectively. Then $M$ is the left mutation of $A_A$ at $P_3$ and the space $\Hom(P_1,\Sigma^{-1}\Cone(P_3\stackrel{\beta}{\to}P_2))$ is 1-dimensional, and therefore $M$ is silting but not tilting, however, $\End(M)$ is isomorphic to $A$. Recently this phenomenon is found to occur already in Dynkin type $\mathbb{D}_5$, as a consequence of the classification of silted algebras of a hereditary algebra of type $\mathbb{D}_5$ in \cite{Xing20}. When trying to give a proof using Theorem~\ref{t:hereditary-heart} we find that this phenomenon actually occurs in all Dynkin types $\mathbb{D}_n~(n\geq 5)$ and $\mathbb{E}_{6,7,8}$. We believe that the derived equivalence between $A$ and $A'$ in the next two examples are well-known, nevertheless, we give a proof.

\begin{example}\label{ex:auto-silting-type-D}
Let $n\geq 5$, $Q$ be the following quiver of type $\mathbb{D}_n$
\[
\xymatrix@R=1pc{
1\\
& 3\ar[lu]_{\alpha_1}\ar[ld]_{\alpha_2} & 4\ar[l]_{\alpha_3} & 5\ar[l]_{\alpha_4} & \cdot\ar@{.}[l] & n-1\ar[l] & n\ar[l]\\
2
}
\]
and let $A=kQ$ be the path algebra of $Q$. \emph{Then there exists a silting object in $\ch^b(\proj A)$ which is not tilting, and its endomorphism algebra is derived equivalent to $A$. The heart $\cb$ of the corresponding $t$-structure is derived equivalent to $\mod A$ but the embedding $\cb\hookrightarrow\cd^b(\mod A)$ does not extend to a derived equivalence\footnote{\cite[Example 4.10]{StanleyvanRoosmalen16} is an example of the same kind for infinite-dimensional algebras}.}

Let $M=\bigoplus_{i=1}^n M_i$ be the $A$-module with 
\begin{align*}
M_1&=\cok(P_4\stackrel{\alpha_1\alpha_3}{\longrightarrow}P_2),\\
M_2&=\cok(P_3\stackrel{\alpha_1\choose\alpha_2}{\longrightarrow}P_1\oplus P_2),\\
M_3&=\cok(P_4\stackrel{\alpha_1\alpha_3\choose\alpha_2\alpha_3}{\longrightarrow}P_1\oplus P_2),\\
M_4&=P_2,\\
M_i&=P_i \text{ for any }i\geq 5.
\end{align*}
Direct calculation shows that $M$ is a tilting module and its endomorphism algebra is isomorphic  to $A'=kQ/(\alpha_1\alpha_3\alpha_4)$. 
Now the assertion is obtained by applying Theorem~\ref{t:hereditary-heart} and its proof to $A'$ in conjunction with the derived equivalence between $A$ and $A'$.
\end{example}

\begin{example}\label{ex:auto-silting-type-E}
Let $n\geq 6$, $Q$ be the quiver
\[
\xymatrix@R=1pc{
1\ar[rd]^{\alpha_1} & \\
& 2 \ar[rd]^{\alpha_2} & \\
& & 4\ar[r]^{\alpha_4} & 5\ar@{.}[r] & \cdot\ar[r] & n. \\
& 3\ar[ru] & 
}
\]
and let $A=kQ$. \emph{Then there exists a silting object in $\ch^b(\proj A)$ which is not tilting, and its endomorphism algebra is derived equivalent to $A$. The heart $\cb$ of the corresponding $t$-structure is derived equivalent to $\mod A$ but the embedding $\cb\hookrightarrow\cd^b(\mod A)$ does not extend to a derived equivalence.}

Let $A'=kQ/(\alpha_4\alpha_2\alpha_1)$. We claim that $A$ and $A'$ are derived equivalent. Instead of constructing a tilting module as in Example~\ref{ex:auto-silting-type-D}, we will prove this in the end of Appendix~\ref{a:quiver-mutation} by using quiver mutation. Now the assertion is obtained by applying Theorem~\ref{t:hereditary-heart} and its proof to $A'$ in conjunction with the derived equivalence between $A$ and $A'$.
\end{example}

\appendix

\section{An alternative proof of Proposition~\ref{p:extend-ADE}}
\label{a:quiver-mutation}

The derived equivalence in Proposition~\ref{p:extend-ADE} for $m=2$ and that in Example~\ref{ex:strong-global-dimension} were first established using quiver mutation.
In this appendix we describe this alternative approach, which provides quite some fun and shows the power of quiver mutation
in the representation theory of finite-dimensional algebras\footnote{In particular, using Keller's quiver mutation applet \cite{KellerQuiverMutation}, one can show the piecewise heredity of some finite-dimensional algebras by just clicking on the mouse.}.  We use
the following result of Mizuno~\cite{Mizuno14}. Terminologies will
be explained after stating the result.

\begin{theorem}\emph{(\cite[Theorem 2.1]{Mizuno14})}\label{t:mizuno}
Let $\Lambda=\widehat{kQ}/\overline{(R)}$ be a finite-dimensional algebra,
where $R=\{r_1,\ldots,r_m\}$ is a set of minimal relations for $\Lambda$. Define a graded quiver
with potential $(Q^a,W^a)$ by
\begin{itemize}
 \item[$\cdot$] $Q^a_0=Q_0$,
 \item[$\cdot$] $Q^a_1=Q_1\cup\{\rho_i\colon t(r_i)\rightarrow s(r_i)\mid i=1,\ldots,m\}$ with $\deg(\alpha)=0$ for $\alpha\in Q_1$
and $\deg(\rho_i)=1$ for $i=1,\ldots,m$.
 \item[$\cdot$] $W^a=\sum_{i=1}^m \rho_i r_i$.
\end{itemize}
Let $i$ be a vertex of $Q$ such that the corresponding simple
module is projective and has injective dimension $1$ or $2$. Let $T$
be the corresponding APR tilting module. Then the endomorphism
algebra of $T$ is isomorphic to the degree $0$ component of the
graded algebra $\widehat{J}(\mu^L_i(Q^a,W^a))$, the complete Jacobian algebra of the left
mutation of $(Q^a,W^a)$ at $i$.
\end{theorem}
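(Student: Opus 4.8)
The plan is to deduce the statement from the interplay between silting mutation and mutation of graded quivers with potential, using the representation theory of the APR tilting module as the bridge. The argument splits into an identification of the unmutated datum with $\Lambda$, the recognition of the APR tilt as a silting mutation, and a compatibility statement between the two kinds of mutation.

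First I would record that $\widehat{J}(Q^a,W^a)_0\cong\Lambda$. Among the cyclic derivatives of $W^a=\sum_i\rho_i r_i$, the derivative with respect to $\rho_i$ is $r_i$, which is homogeneous of degree $0$, whereas each $\partial_\alpha W^a$ with $\alpha\in Q_1$ is homogeneous of degree $1$. Since $\widehat{kQ^a}$ has no paths of negative degree and the Jacobian ideal is generated by homogeneous elements, its intersection with the degree-$0$ subalgebra $\widehat{kQ}\subseteq\widehat{kQ^a}$ is precisely the closed ideal generated by $r_1,\dots,r_m$; hence $\widehat{J}(Q^a,W^a)_0\cong\widehat{kQ}/\overline{(R)}=\Lambda$. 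I would then think of $(Q^a,W^a)$ as the graded quiver with potential attached to $\Lambda$: its degree-$0$ arrows are those of $Q$ and recover $\Lambda$ as above, its degree-$1$ arrows $\rho_j$ are in bijection with the chosen minimal relations (equivalently, with bases of the relevant second Ext groups), and the potential records how each relation is built from the arrows; $\mu^L_i(Q^a,W^a)$ is then the graded Derksen--Weyman--Zelevinsky mutation of this datum at $i$.

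Next, since $S_i$ is projective and not injective, the left silting mutation $\mu^L_i(\Lambda)$ of $\Lambda$ at the indecomposable summand $P_i=S_i$ of $\ch^b(\proj\Lambda)$, obtained by replacing $P_i$ with the cone of a minimal left $\add(\bigoplus_{j\neq i}P_j)$-approximation $P_i\to E$, is isomorphic in $\ch^b(\proj\Lambda)$ to (a projective resolution of) the APR tilting module $T$ associated with $S_i$; in particular $\End_\Lambda(T)\cong\End(\mu^L_i(\Lambda))$. The core step is then to prove that graded quiver-with-potential mutation at $i$ computes, in degree $0$, the endomorphism algebra of this silting mutation, that is,
\[\widehat{J}(\mu^L_i(Q^a,W^a))_0\cong\End(\mu^L_i(\Lambda)).\]
Combined with the previous sentence and with the first step, this gives $\widehat{J}(\mu^L_i(Q^a,W^a))_0\cong\End_\Lambda(T)$, which is the assertion. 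The hypothesis that $S_i$ has injective dimension $1$ or $2$ is what keeps the right-hand side well behaved: it guarantees that $\mu^L_i(Q^a,W^a)$ is again a non-negatively graded quiver with potential admitting a reduction after finitely many steps, so that forming its degree-$0$ Jacobian algebra is meaningful and does not interact with the (possibly higher-degree) arrows created near $i$; without such a bound one leaves the setting in which the theorem is stated.

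The main obstacle is this last compatibility: identifying $\widehat{J}(\mu^L_i(Q^a,W^a))$ with the graded algebra governing the silting object $\mu^L_i(\Lambda)$, and checking that the reduction step of quiver-with-potential mutation matches the change of endomorphism algebra under an irreducible silting mutation, uniformly in the finite-dimensional algebra $\Lambda=\widehat{kQ}/\overline{(R)}$. A more pedestrian alternative would avoid the conceptual identification altogether: compute $\mu^L_i(Q^a,W^a)$ and the degree-$0$ part of its Jacobian relations directly from the combinatorial mutation rule, and match them term by term with the relations of $\End_\Lambda(T)$ read off from a minimal projective presentation of $\tau^{-1}S_i$. This trades the conceptual difficulty for a finite bookkeeping, again kept finite by the bound on the injective dimension of $S_i$.
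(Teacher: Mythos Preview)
The paper does not prove this theorem at all: it is quoted verbatim from \cite[Theorem 2.1]{Mizuno14} and used as a black box in Appendix~\ref{a:quiver-mutation}. There is therefore no ``paper's own proof'' to compare against; the paper relies on Mizuno's original argument, which it does not reproduce.

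Your sketch is a plausible outline of how such a result could be established, and the first step (identifying $\widehat{J}(Q^a,W^a)_0$ with $\Lambda$) is essentially correct and standard. However, the central compatibility step --- that graded quiver-with-potential mutation at $i$ computes in degree $0$ the endomorphism algebra of the silting mutation $\mu^L_i(\Lambda)$ --- is precisely the content of Mizuno's theorem, and you have not given an argument for it, only restated what must be shown and gestured at two possible strategies. Whether either strategy actually matches Mizuno's proof, and whether the bookkeeping really closes (in particular, how the reduction step of DWZ mutation interacts with the degree-$0$ truncation), cannot be assessed from what you have written. If you intend to supply an independent proof rather than cite Mizuno, you would need to carry out one of these alternatives in full.
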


A \emph{graded quiver with potential} is a pair $(Q,W)$, where $Q$
is a finite graded quiver and $W$ is a homogeneous linear
combination of cycles of $Q$. For our purpose we assume that $W$ is
homogeneous of degree $1$. Let $\widehat{kQ}$ be the complete path algebra of $Q$, \ie the
completion of the path algebra $kQ$ with respect to the ideal generated by all arrows
in the category of graded algebras. The algebra $\widehat{kQ}$ is naturally equipped with a topology. For any
subset $S$ of $\widehat{kQ}$, we denote by $\overline{S}$ the closure of $S$. For an arrow $\alpha$ of $Q$, define
$\del_\alpha\colon\widehat{kQ}\rightarrow \widehat{kQ}$ as the unique continuous linear map which takes
a path $c$ to the sum $ \sum_{c=u p v} vu $ taken over all
decompositions of the path $c$ (where $u$ and $v$ are possibly trivial
paths). The \emph{complete Jacobian algebra} $J(Q,W)$ is the graded algebra
\[\widehat{J}(Q,W)=\widehat{kQ}/\overline{(\del_\alpha W,\alpha\in Q_1)}.\]
The potential $W$ is said to be \emph{rigid} if
$\widehat{J}(Q,W)/\overline{[\widehat{J}(Q,W),\widehat{J}(Q,W)]}=\prod_{Q_0}K$.

Let $Q$ be a graded quiver without loops or 2-cycles. Let $i$ be a
vertex of $Q$. We define the \emph{left mutation} ${\mu}^L_{i}(Q)$
of $Q$ at $i$ as the graded quiver obtained from $Q$ in the
following three steps~\cite{FominZelevinsky02,AmiotOppermann14}
\begin{itemize}
\item[--] For each arrow $\beta$ with target $i$ and each
arrow $\alpha$ with source $i$, add a new arrow $[\alpha\beta]$ of degree $\deg(\alpha)+\deg(\beta)$ from the source of
$\beta$ to the target of $\alpha$.
\item[--] Replace each arrow $\alpha$ with source $i$ with
an arrow $\alpha^\star$ in the opposite direction of degree $-\deg(\alpha)$;
replace each arrow $\beta$ with target $i$ with
an arrow $\beta^\star$ in the opposite direction of degree $1-\deg(\beta)$.
\item[--] Remove all arrows in a maximal set of pairwise disjoint 2-cycles.
\end{itemize}

This can be extended to a left mutation $\mu^L_i(Q,W)$ of graded quivers
with potential, see~\cite{DerksenWeymanZelevinsky08,AmiotOppermann14}. We will not need the precise definition. The following result of Derksen, Weyman and Zelevinsky
is sufficient for us.

\begin{proposition}\label{p:rigid-qp} \emph{(\cite[Corollary 8.2]{DerksenWeymanZelevinsky08})}
Assume that $(Q,W)$ is a rigid graded quiver with potential such
that $W$ contains no 2-cycles. Then $Q$ has no 2-cycles and the
mutated graded quiver with potential $\mu^L_i(Q,W)$ is again rigid.
In particular, the graded quiver of $\mu^L_i(Q,W)$ coincides with
the mutated graded quiver $\mu^L_i(Q)$.
\end{proposition}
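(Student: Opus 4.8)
The plan is to deduce the statement from two structural facts about quivers with potential due to Derksen, Weyman and Zelevinsky \cite{DerksenWeymanZelevinsky08}: a rigid QP is $2$-acyclic, and the class of rigid QPs is closed under mutation. Recall that for a graded quiver with potential $(Q,W)$ the left mutation $\mu^L_i(Q,W)$ is produced in two stages: first an explicit \emph{premutation}, which performs the combinatorial moves at the vertex $i$ (composing the arrows into $i$ with the arrows out of $i$, reversing the arrows incident to $i$) together with a bookkeeping change of the potential, and which may create $2$-cycles; and then the passage to the \emph{reduced part}, which by the Splitting Theorem amounts to discarding a ``trivial'' direct summand supported on these new $2$-cycles. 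Rigidity of $(Q,W)$ means that the commutator quotient $\widehat{J}(Q,W)/\overline{[\widehat{J}(Q,W),\widehat{J}(Q,W)]}$ equals $\prod_{Q_0}K$; equivalently, every cyclic path of $Q$ is cyclically equivalent to an element of the closed Jacobian ideal $\overline{(\del_\alpha W\colon \alpha\in Q_1)}$.

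First I would prove that a rigid, reduced QP (potential with no cyclic term of length $\le 2$; as usual $Q$ has no loops) has no $2$-cycle in its quiver. Suppose $c=ab$ were a $2$-cycle, with $a\colon i\to j$ and $b\colon j\to i$, $i\neq j$. Since every $\del_\alpha W$ lies in the square of the arrow ideal, the only way the Jacobian ideal can contribute in path-length $2$ to the commutator quotient is through the partial derivatives $\del_\alpha W^{(3)}$ of the length-$3$ part $W^{(3)}$ of $W$; but $W^{(3)}$ is a combination of genuine oriented triangles $i'\to j'\to k'\to i'$ on three distinct vertices, so each $\del_\alpha W^{(3)}$ is a sum of \emph{non-closed} paths of length $2$, none of which is cyclically equivalent to the closed path $c$. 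Hence $c$ represents a nonzero class in the commutator quotient of $\widehat{J}(Q,W)$, contradicting rigidity. For the invariance of rigidity under mutation I would follow \cite{DerksenWeymanZelevinsky08}: the premutation changes the Jacobian algebra by a controlled, essentially invertible substitution, so it does not affect the triviality of the commutator quotient, and splitting off a trivial summand leaves the commutator quotient unchanged as well; thus $\mu^L_i(Q,W)$ is again rigid.

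Granting these two facts, the Proposition follows. Applying $2$-acyclicity to $(Q,W)$ itself gives that $Q$ has no $2$-cycles, and applying it to the rigid QP $\mu^L_i(Q,W)$ (whose potential, being reduced, again has no term of length $\le 2$) shows that its underlying graded quiver has no $2$-cycles. Since this quiver arises from the premutation by deleting a maximal set of pairwise disjoint $2$-cycles, and $\mu^L_i(Q)$ is \emph{defined} as the result of exactly that combinatorial deletion applied to the premutation of $Q$, the two coincide: $2$-acyclicity of the outcome is precisely what guarantees that no $2$-cycle is left ``stuck''. The grading enters only as bookkeeping, since the premutation, the splitting and the deletion are all degree-preserving. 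I expect the main obstacle to be the second ingredient, the stability of rigidity under mutation: this needs a careful account of how the closed Jacobian ideal, hence the $0$-th cyclic homology of the Jacobian algebra, transforms under the premutation substitution. The $2$-acyclicity argument and the final identification are comparatively formal, once conventions about loops and cyclic equivalence are fixed.
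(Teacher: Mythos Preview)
The paper does not prove this proposition: it is stated as a quotation of \cite[Corollary 8.2]{DerksenWeymanZelevinsky08} and used as a black box. So there is no ``paper's own proof'' to compare against; your proposal is strictly more than what the paper does.

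That said, your outline is the right one and matches the structure of the argument in \cite{DerksenWeymanZelevinsky08}. The reduction to the two structural facts (rigid reduced QPs are $2$-acyclic; rigidity is mutation-invariant) is exactly how DWZ organise things, and your observation that the grading is pure bookkeeping is what justifies citing the ungraded result for the graded statement (this is the point of \cite{AmiotOppermann14}). Your argument for $2$-acyclicity is essentially correct; to make it airtight you should phrase it via the path-length filtration: modulo paths of length $\geq 3$, the Jacobian ideal contributes only the length-$2$ parts of the $\partial_\alpha W$, and these are linear combinations of non-closed length-$2$ paths (since every length-$3$ cycle in a loop-free quiver visits three distinct vertices), hence already vanish in $\widehat{kQ}/\overline{[\widehat{kQ},\widehat{kQ}]}$; so a $2$-cycle $ab$ survives in the commutator quotient of $\widehat{J}(Q,W)$. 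For the mutation-invariance of rigidity you correctly identify this as the substantive step and defer to DWZ; that is appropriate here, since reproducing their analysis of how the Jacobian ideal transforms under premutation and splitting would amount to rewriting a chunk of \cite{DerksenWeymanZelevinsky08}. Your final paragraph, identifying the underlying quiver of $\mu^L_i(Q,W)$ with $\mu^L_i(Q)$ via $2$-acyclicity of the reduced mutation, is the standard deduction.
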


We have the following useful corollary.

\begin{corollary}\label{cor:iterated-APR-tilting-via-mutation}
Let $\Lambda$ and $(Q^a,W^a)$ be as in Theorem~\ref{t:mizuno}. Assume further that $\Lambda$ has global dimension at most two and that $(Q^a,W^a)$ is a rigid quiver with potential and $W^a$ contains no 2-cycles. Let $i_1,\ldots,i_t$ be a sequence of vertices of $Q^a$ such that $i_s$ is a source of the degree $0$ part of $\mu_{i_{s-1}}\cdots\mu_{i_1}(Q^a)$ for any $1\leq s\leq t$. If $Q'=\mu_{i_t}\cdots\mu_{i_1}(Q^a)$ is acyclic, then $\Lambda$ and $kQ'$ are related by a sequence of APR tilts, in particular, they are derived equivalent.
\end{corollary}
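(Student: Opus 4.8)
The plan is to prove Corollary~\ref{cor:iterated-APR-tilting-via-mutation} by inductively applying Theorem~\ref{t:mizuno} along the given sequence of vertices, using Proposition~\ref{p:rigid-qp} to keep the rigidity hypothesis alive at every stage. The key observation is that Mizuno's theorem identifies an APR tilt of $\Lambda$ with the passage from $(Q^a,W^a)$ to $\widehat{J}(\mu^L_i(Q^a,W^a))$ (degree $0$ part), \emph{provided} the relevant simple module is projective of injective dimension $1$ or $2$ and the data $(Q^a,W^a)$ comes from a minimal presentation. So the bulk of the argument is bookkeeping: I must check that at each step $s$, the algebra obtained so far is again of the form covered by Theorem~\ref{t:mizuno}, and that the vertex $i_s$ satisfies the APR-tilting hypotheses.

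First I would set up the induction. Let $(Q^{(0)},W^{(0)})=(Q^a,W^a)$ and $\Lambda_0=\Lambda$, and define $(Q^{(s)},W^{(s)})=\mu^L_{i_s}(Q^{(s-1)},W^{(s-1)})$. By Proposition~\ref{p:rigid-qp}, since $(Q^{(0)},W^{(0)})$ is rigid with $W^{(0)}$ containing no 2-cycles, each $(Q^{(s)},W^{(s)})$ is rigid, has no 2-cycles in its quiver, and its underlying graded quiver is exactly $\mu^L_{i_s}(Q^{(s-1)})$ — so the purely combinatorial mutation $Q'=\mu_{i_t}\cdots\mu_{i_1}(Q^a)$ really does compute the quiver of the final quiver with potential. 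Next, I would argue that whenever $i_s$ is a source of the degree-$0$ part of $Q^{(s-1)}$, the corresponding simple $\Lambda_{s-1}$-module $S_{i_s}$ is projective; and because $\Lambda_{s-1}$ is obtained from a global-dimension-$\leq 2$ algebra by a sequence of APR tilts (which preserve global dimension $\leq 2$, as APR tilts are tilting modules of projective dimension $\leq 1$, plus $\Lambda_0$ has gldim $\leq 2$ by hypothesis, and derived equivalence of finite-dimensional algebras bounds global dimension in this range — one uses that a projective simple over an algebra of gldim $\leq 2$ has injective dimension $\leq 2$), the injective dimension of $S_{i_s}$ is $1$ or $2$ (it is nonzero since $\Lambda_{s-1}$ is not semisimple unless we are already done). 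Hence the APR tilting module $T_s$ at $i_s$ is defined, and Theorem~\ref{t:mizuno} applied to $\Lambda_{s-1}$ gives $\End_{\Lambda_{s-1}}(T_s)\cong \widehat{J}(\mu^L_{i_s}(Q^{(s-1)},W^{(s-1)}))_0=\Lambda_s$. Composing, $\Lambda$ and $\Lambda_t=\widehat{J}(Q',W')_0$ are related by a sequence of APR tilts. Finally, since $Q'$ is acyclic, any potential on it is zero (acyclic quivers have no cycles), so $\widehat{J}(Q',W')=\widehat{kQ'}=kQ'$, and $\Lambda_t\cong kQ'$. As each APR tilt is a tilting module, derived equivalence of $\Lambda$ and $kQ'$ follows from the Brenner--Butler / Happel theory of tilting modules.

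The step I expect to be the main obstacle is verifying that the hypotheses of Theorem~\ref{t:mizuno} genuinely persist: namely that each intermediate algebra $\Lambda_s=\End(T_s)$ is presented (up to the relevant isomorphism) by a \emph{minimal} relation set whose associated graded quiver with potential is precisely $(Q^{(s)},W^{(s)})$, so that Mizuno's theorem can be re-applied at the next step. This is where rigidity does the real work — Proposition~\ref{p:rigid-qp} guarantees the combinatorial mutation matches, but one must still connect the Jacobian-algebra description $\widehat{J}(Q^{(s)},W^{(s)})_0$ of $\Lambda_s$ back to a minimal presentation $\widehat{k\bar{Q}^{(s)}}/\overline{(\bar R^{(s)})}$ whose Mizuno construction returns $(Q^{(s)},W^{(s)})$; rigidity of $(Q^{(s)},W^{(s)})$ together with the absence of 2-cycles is exactly what makes the degree-$1$ arrows of $Q^{(s)}$ correspond to a minimal set of relations of $\Lambda_s$ and the degree-$0$ part to its ordinary quiver. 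A secondary, more routine obstacle is the global-dimension/injective-dimension bookkeeping needed to license the APR-tilt hypothesis at each stage; this is standard but must be stated carefully.
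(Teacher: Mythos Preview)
Your approach is essentially identical to the paper's: induction on $t$, applying Theorem~\ref{t:mizuno} for the APR tilt at each step, Proposition~\ref{p:rigid-qp} to maintain rigidity and ensure the mutated graded quiver matches the combinatorial mutation, and preservation of $\gldim\leq 2$ under APR tilts (for which the paper simply cites \cite[Proposition~1.15]{AuslanderPlatzeckReiten79}). The paper's proof is much terser and does \emph{not} explicitly address the point you flag as the main obstacle --- that the mutated graded QP $\mu^L_{i_1}(Q^a,W^a)$ again stands in the Mizuno relationship to $\Lambda_1$ so that Theorem~\ref{t:mizuno} can be re-applied --- so your sketch is, if anything, more scrupulous than the original on this issue; the paper takes it for granted, implicitly relying on the fact that for a rigid graded QP of potential degree~$1$ without $2$-cycles the degree-$0$ part of the complete Jacobian algebra has the degree-$0$ subquiver as its ordinary quiver and the degree-$1$ arrows as a minimal relation set.
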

\begin{proof}
Under the assumptions the simple $\Lambda$-module corresponding to $i_1$ is projective and has injective dimension at most $2$. Let $\Lambda_1$ be the degree $0$ component of $\widehat{J}(\mu_{i_1}^L(Q^a,W^a))$. By Theorem~\ref{t:mizuno},  the algebras $\Lambda$ and $\Lambda_1$ are related by an APR tilt. Moreover, by Proposition~\ref{p:rigid-qp}, the quiver with potential $\mu_{i_1}^L(Q^a,W^a)$ is again rigid, its quiver is $\mu_{i_1}^L(Q^a)$ and its potential contains no 2-cycles. By~\cite[Proposition 1.15]{AuslanderPlatzeckReiten79}, $\Lambda_1$ has global dimension at most two. If $t=1$, then $\mu_{i_1}^L(Q^a,W^a)=(Q',0)$ and therefore $\widehat{J}(Q',0)=kQ'$, because $Q'=\mu_{i_1}^L(Q^a)$ is acyclic. If $t\geq 2$, then by induction $\Lambda_1$ and $kQ'$ are related by a sequence of APR tilts, and the proof finishes. 
\end{proof}

In the rest of this appendix, we apply Corollary~\ref{cor:iterated-APR-tilting-via-mutation} to show the desired derived equivalence in Proposition~\ref{p:extend-ADE} and Examples~\ref{ex:strong-global-dimension}~and~\ref{ex:auto-silting-type-E}. As we will always perform APR tilting, it follows by induction that all arrows appearing below are either in degree $0$, represented as solid arrows, or in degree $1$, represented as dashed arrows.

\begin{proof}[Proof of Proposition~\ref{p:extend-ADE} (case $m=2$)]
The method works for general $m$ as well, but the quivers below will become messy. So we assume $m=2$.

Let $\Lambda=\Lambda'_2(n,n-1)$. According to Theorem~\ref{t:mizuno}, the associated graded quiver $Q^a$ is 
\[\xymatrix@R=1pc@C=1.5pc{1\ar[dr]_{\beta_1}\\
&3\ar[r]^{\alpha_{n-3}}&4\ar@{.}[r]&n\ar[r]^(0.4){\alpha_1}&n+1\ar@/^20pt/@{-->}[lllld]^{\rho_2}\ar@{-->}@/_20pt/[llllu]_{\rho_1},\\
2\ar[ur]^{\beta_2} }\] where $\rho_1$ and $\rho_2$ are in degree $1$
and all other arrows are in degree $0$, and the associated potential is
$$W^a=\rho_1\alpha_1\cdots\alpha_{n-3}\beta_1+\rho_2\alpha_1\cdots\alpha_{n-3}\beta_2.$$
This quiver with potential is rigid, because the two minimal oriented cycles both vanish in the complete Jacobian algebra. Thus it follows by Corollary~\ref{cor:iterated-APR-tilting-via-mutation} and the following claim that $\Lambda$ is
piecewise hereditary of type $\mathbb{T}_{2,2,2,n-3}$, as desired. Note that each left mutation is taken at a vertex which becomes a source after removing the dashed arrows.

Claim: For $3\leq m\leq n-1$ let $\nu_m^L=\mu^L_1\cdots\mu^L_m$. The
iterated left mutation
$$\mu_3^L\cdots\mu_n^L\nu_{n-1}^L\cdots\nu_3^L\mu_2^L\mu_1^L(Q)$$ is
the quiver
\[\xymatrix@R=1pc@C=1pc{1\ar[dr]\\
&3\ar@/^20pt/[rrrr]&4\ar[l]\ar@{.}[r]&n-1&n\ar[l]&n+1\\
2\ar[ur] }\]

We approach the claim in four steps. For concrete $n$, one can perform these mutations using Keller's quiver mutation applet \cite{KellerQuiverMutation}.

Step 1: We perform $\nu^L_3\mu^L_2\mu^L_1$.
\[
\begin{picture}(400,205)
 \put(0,190){
$\xymatrix@R=1pc@C=1pc{1\ar[dr]\\
&3\ar[r]&4\ar@{.}[r]&n\ar[r]&n+1\ar@/^20pt/@{-->}[lllld]\ar@{-->}@/_20pt/[llllu]\\
2\ar[ur] }$} \put(200,190){
$\xymatrix@R=1pc@C=1pc{1\ar@/^20pt/[rrrrd]\\
&3\ar[r]\ar[lu]\ar[ld]&4\ar@{.}[r]&n\ar[r]&n+1\ar@{-->}@/_20pt/[lll]\ar@/^20pt/@{-->}[lll]\\
2\ar@/_20pt/[rrrru]
}$}
 \put(200,70){
$\xymatrix@R=1pc@C=1pc{1\ar[dr]\\
&3\ar@/_20pt/[rrrr]\ar@/^20pt/[rrrr]&4\ar[l]\ar[r]&\cdot\ar@{.}[r]&n\ar[r]&n+1\ar@/^20pt/@{-->}[llllld]\ar@{-->}@/_20pt/[lllllu]\ar@{-->}@/_10pt/[lll]\ar@/^10pt/@{-->}[lll]\\
2\ar[ur] }$} \put(0,70){
$\xymatrix@R=1pc@C=1pc{1\ar@/^20pt/[rrrrrd]\\
&3\ar[lu]\ar[ld]&4\ar[l]\ar[r]&\cdot\ar@{.}[r]&n\ar[r]&n+1\ar@{-->}@/_20pt/[lll]\ar@/^20pt/@{-->}[lll]\\
2\ar@/_20pt/[rrrrru] }$}
\put(170,160){$\stackrel{\scriptstyle{\mu^L_2\mu^L_1}}{\longrightarrow}$}
\put(260,100){${\downarrow}$}
\put(270,100){${\scriptstyle{\mu^L_3}}$}
\put(170,40){$\stackrel{\scriptstyle{\mu^L_1\mu^L_2}}{\longleftarrow}$}
\end{picture}\]

Step 2: We claim that for $4\leq m\leq n-1$ the composite mutation $\nu^L_m=\mu^L_1\cdots\mu^L_m$ takes
\[\xymatrix@R=1pc@C=1pc{\\ 1\ar@/^30pt/[rrrrrrrrrd]\\
&3\ar[lu]\ar[ld]&4\ar[l]&\cdot\ar[l]\ar@{.}[r]&\cdot&m\ar[l]\ar[r]&m+1\ar[r]&\cdot\ar@{.}[r]&n\ar[r]&n+1\ar@{-->}@/_20pt/[llll]\ar@/^20pt/@{-->}[llll]\\
2\ar@/_30pt/[rrrrrrrrru]
}\]
to
\[\xymatrix@R=1pc@C=1pc{\\ 1\ar@/^30pt/[rrrrrrrrrd]\\
&3\ar[lu]\ar[ld]&4\ar[l]&\cdot\ar[l]\ar@{.}[r]&\cdot&m\ar[l]&m+1\ar[l]\ar[r]&\cdot\ar@{.}[r]&n\ar[r]&n+1\ar@{-->}@/_15pt/[lll]\ar@/^15pt/@{-->}[lll]\\
2\ar@/_30pt/[rrrrrrrrru]\\{}
}.\]
It follows by induction on $m$ that $\nu^L_{n-1}\cdots\nu_3^L\mu_2^L\mu_1^L(Q)$ is the quiver
\[\xymatrix@R=1pc@C=1pc{ 1\ar@/^20pt/[rrrrrrrd]\\
&3\ar[lu]\ar[ld]&4\ar[l]&\cdot\ar[l]\ar@{.}[r]&n-1&n\ar[l]&&n+1\ar@{-->}[ll]\\
2\ar@/_20pt/[rrrrrrru] }.\]

Step 3: We prove the claim in Step 2.
Observe that for $4\leq l\leq m-1$ the mutation $\mu^L_l$ takes
\[\xymatrix@R=1pc@C=0.9pc{\\ 1\ar@/^40pt/[rrrrrrrrrrrrrd]\\
&3\ar[lu]\ar[ld]&4\ar[l]&\cdot\ar[l]\ar@{.}[r]&\cdot&l-1\ar[l]&l\ar[l]\ar[r]
&l+1\ar@/^20pt/[rrrrrr]\ar@/_20pt/[rrrrrr] &\cdot\ar[l]\ar@{.}[r]&m&m+1\ar[l]\ar[r]&\cdot\ar@{.}[r]&n\ar[r]&n+1\ar@{-->}@/_30pt/[lllllll]\ar@/^30pt/@{-->}[lllllll]\ar@{-->}@/_10pt/[lll]\ar@/^10pt/@{-->}[lll]\\
2\ar@/_40pt/[rrrrrrrrrrrrru]\\{}
}\]
to
\[\xymatrix@R=1pc@C=0.9pc{\\ 1\ar@/^40pt/[rrrrrrrrrrrrrd]\\
&3\ar[lu]\ar[ld]&4\ar[l]&\cdot\ar[l]\ar@{.}[r]&\cdot&l-1\ar[l]\ar[r]&l\ar@/^20pt/[rrrrrrr]\ar@/_20pt/[rrrrrrr]
&l+1\ar[l] &\cdot\ar[l]\ar@{.}[r]&m&m+1\ar[l]\ar[r]&\cdot\ar@{.}[r]&n\ar[r]&n+1\ar@{-->}@/_30pt/[llllllll]\ar@/^30pt/@{-->}[llllllll]\ar@{-->}@/_10pt/[lll]\ar@/^10pt/@{-->}[lll]\\
2\ar@/_40pt/[rrrrrrrrrrrrru]\\{}
}\]
Therefore
\[\xymatrix@R=1pc@C=1pc{\\ 1\ar@/^30pt/[rrrrrrrrrd]\\
&3\ar[lu]\ar[ld]&4\ar[l]&\cdot\ar[l]\ar@{.}[r]&\cdot&m\ar[l]\ar[r]&m+1\ar[r]&\cdot\ar@{.}[r]&n\ar[r]&n+1\ar@{-->}@/_20pt/[llll]\ar@/^20pt/@{-->}[llll]\\
2\ar@/_30pt/[rrrrrrrrru]\\{}
}\]
\[\downarrow \mu^L_m\]
\[\xymatrix@R=1pc@C=1pc{\\ 1\ar@/^40pt/[rrrrrrrrrrd]\\
&3\ar[lu]\ar[ld]&4\ar[l]&\cdot\ar[l]\ar@{.}[r]&\cdot&m-1\ar[l]\ar[r]&m\ar@/^17pt/[rrrr]\ar@/_17pt/[rrrr]
&m+1\ar[l]\ar[r]&\cdot\ar@{.}[r]&n\ar[r]&n+1\ar@{-->}@/_25pt/[lllll]\ar@/^25pt/@{-->}[lllll]\ar@{-->}@/_10pt/[lll]\ar@/^10pt/@{-->}[lll]\\
2\ar@/_40pt/[rrrrrrrrrru]\\{}\\{}
}\]
\[\downarrow\mu^L_4\cdots\mu^L_{m-1}\]
\[\xymatrix@R=1pc@C=1pc{\\ 1\ar@/^30pt/[rrrrrrrrd]\\
&3\ar[lu]\ar[ld]\ar[r]&4\ar@/^17pt/[rrrrrr]\ar@/_17pt/[rrrrrr] &\cdot\ar[l]\ar@{.}[r]&\cdot\ar@{.}[l]&m+1\ar[l]\ar[r]&\cdot\ar@{.}[r]&n\ar[r]&n+1\ar@{-->}@/_10pt/[lll]\ar@/^10pt/@{-->}[lll]\ar@{-->}@/_25pt/[lllllll]\ar@/^25pt/@{-->}[lllllll]\\
2\ar@/_30pt/[rrrrrrrru]\\{}
}\]
\[\downarrow\mu^L_3\]
\[\xymatrix@R=1pc@C=1pc{\\ 1\ar[dr]\\
&3\ar@/^20pt/[rrrrrrr]\ar@/_20pt/[rrrrrrr]&4\ar[l] &\cdot\ar[l]\ar@{.}[r]&\cdot\ar@{.}[l]&m+1\ar[l]\ar[r]&\cdot\ar@{.}[r]&n\ar[r]&n+1\ar@{-->}@/_10pt/[lll]\ar@/^10pt/@{-->}[lll]\ar@{-->}@/_30pt/[llllllllu]\ar@/^30pt/@{-->}[lllllllld]\\
2\ar[ur]\\{}
}\]
\[\downarrow\mu^L_1\mu^L_2\]
\[\xymatrix@R=1pc@C=1pc{\\ 1\ar@/^30pt/[rrrrrrrrd]\\
&3\ar[lu]\ar[ld]&4\ar[l] &\cdot\ar[l]\ar@{.}[r]&\cdot\ar@{.}[l]&m+1\ar[l]\ar[r]&\cdot\ar@{.}[r]&n\ar[r]&n+1\ar@{-->}@/_10pt/[lll]\ar@/^10pt/@{-->}[lll]\\
2\ar@/_30pt/[rrrrrrrru]\\{}
}\]
as desired.

Step 4. We continue with Step 2. Observe that for $4\leq l\leq n-2$ the mutation $\mu^L_l$ takes
\[\xymatrix@R=1pc@C=1pc{\\ 1\ar@/^30pt/[rrrrrrrrrrrd]\\
&3\ar[lu]\ar[ld]&4\ar[l]&\cdot\ar[l]\ar@{.}[r]&\cdot&l-1\ar[l]&l\ar[l]\ar[r]
&l+1\ar@/^10pt/[rrrr]&\cdot\ar[l]\ar@{.}[r]&\cdot&n\ar[l]&n+1\ar@{-->}@/_20pt/[lllll]\\
2\ar@/_30pt/[rrrrrrrrrrru]\\{}\\{}
}\]
to
\[\xymatrix@R=1pc@C=1pc{\\ 1\ar@/^30pt/[rrrrrrrrrrrd]\\
&3\ar[lu]\ar[ld]&4\ar[l]&\cdot\ar[l]\ar@{.}[r]&\cdot&l-1\ar[l]\ar[r]&l\ar@/^13pt/[rrrrr]
&l+1\ar[l]&\cdot\ar[l]\ar@{.}[r]&\cdot&n\ar[l]&n+1\ar@{-->}@/_20pt/[llllll]\\
2\ar@/_30pt/[rrrrrrrrrrru]\\{}\\{}
}\]
Therefore
\[\xymatrix@R=1pc@C=1pc{\\ 1\ar@/^20pt/[rrrrrrrd]\\
&3\ar[lu]\ar[ld]&4\ar[l]&\cdot\ar[l]\ar@{.}[r]&n-1&n\ar[l]&&n+1\ar@{-->}[ll]\\
2\ar@/_20pt/[rrrrrrru]\\{} }\]
\[\downarrow\mu^L_n\]
\[\xymatrix@R=1pc@C=1pc{\\ 1\ar@/^20pt/[rrrrrrrd]\\
&3\ar[lu]\ar[ld]&4\ar[l]&\cdot\ar[l]\ar@{.}[r]&\cdot&n-1\ar[l]\ar[r]&n\ar[r]&n+1\ar@/_10pt/@{-->}[ll]\\
2\ar@/_20pt/[rrrrrrru]\\{}
}\]
\[\downarrow\mu^L_{n-1}\]
\[\xymatrix@R=1pc@C=1pc{\\ 1\ar@/^30pt/[rrrrrrrrd]\\
&3\ar[lu]\ar[ld]&4\ar[l]&\cdot\ar[l]\ar@{.}[r]&\cdot&n-2\ar[l]\ar[r]&n-1\ar@/^10pt/[rr]&n\ar[l]&n+1\ar@/_20pt/@{-->}[lll]\\
2\ar@/_30pt/[rrrrrrrru]\\{}
}\]
\[\downarrow\mu^L_4\cdots\mu^L_{n-2}\]
\[\xymatrix@R=1pc@C=1pc{\\ 1\ar@/^30pt/[rrrrrrrd]\\
&3\ar[lu]\ar[ld]\ar[r]&4\ar@/^15pt/[rrrrr]&\cdot\ar[l]\ar@{.}[r]&\cdot&n-1\ar[l]&n\ar[l]&n+1\ar@/_25pt/@{-->}[llllll]\\
2\ar@/_30pt/[rrrrrrru]\\{}
}\]
\[\downarrow\mu^L_3\]
\[\xymatrix@R=1pc@C=1pc{1\ar[dr]\\
&3\ar@/^20pt/[rrrr]&4\ar[l]\ar@{.}[r]&n-1&n\ar[l]&n+1\\
2\ar[ur] }\]
This finishes the proof.
\end{proof}


\newcommand{\cexample}{\noindent {\it Example}~\ref{ex:strong-global-dimension} (Continued).}
\begin{cexample}
Recall that $A$ is the path algebra of the quiver $Q$
\[\xymatrix@C=1pc@R=1pc{&&2\ar[rrr]^{\beta}&&&3\ar[rrd]^{\alpha}\\
1\ar[rrd]_\eta\ar[rru]^\gamma&&&&&&&6,\\
&&4\ar[rrr]_\xi&&&5\ar[rru]_\delta}\]
modulo the ideal generated by $\beta\gamma$ and $\delta\xi\eta$, and $A'$ is the path algebra of $Q'$
\[\xymatrix@C=1pc@R=1pc{&&&1\ar[d]&&&\\
&&&2\ar[rrrd]&&&\\
3\ar[rrru]\ar[rrd]&&&&&&6.\\
&&4\ar[rr]&&5\ar[rru]&&}\]
Below we establish the derived equivalence between $A$ and $A'$ by using quiver mutation.
According to Theorem~\ref{t:mizuno}, the graded quiver $Q^a$ associated to $A$ is
\[\xymatrix@C=1pc@R=1pc{&&2\ar[rrr]^{\beta}&&&3\ar[rrd]^{\alpha}\ar@{-->}[llllld]|{\rho_1}\\
1\ar[rrd]_\eta\ar[rru]^\gamma&&&&&&&6\ar@{-->}[lllllll]|{\rho_2}\\
&&4\ar[rrr]_\xi&&&5\ar[rru]_\delta}\]
which is obtained from $Q$ by adding a dashed arrow in the reverse direction to each relation of $A$ (the solid arrows are in degree $0$ and dashed arrows are in degree $1$), and the associated potential is
$
W^a=\rho_1\beta\gamma+\rho_2\delta\xi\eta
$. This is a rigid quiver with potential. We perform the following sequence of left mutations of graded quivers
\[
\begin{picture}(400,330)
 \put(0,310){
$\xymatrix@C=0.8pc@R=1pc{&&2\ar[rrr]&&&3\ar[rrd]\ar@{-->}[llllld]\\
1\ar[rrd]\ar[rru]&&&&&&&6\ar@{-->}[lllllll]\\
&&4\ar[rrr]&&&5\ar[rru]}$} 

\put(200,310){
$\xymatrix@C=0.8pc@R=1pc{&&2\ar[lld]&&&3\ar[rrd]\ar@{-->}[llldd]\\
1\ar[rrrrru]\ar[rrrrrrr]&&&&&&&6\ar@{-->}[lllllu]\ar@{-->}[llllld]\\
&&4\ar[llu]\ar[rrr]&&&5\ar[rru]}$}

 \put(0,190){
$\xymatrix@C=0.8pc@R=1pc{&&2\ar[lld]&&&3\ar[rrd]\\
1\ar[rrd]&&&&&&&6\ar@{-->}[lllllu]\\
&&4\ar[rrrrru]\ar[rrr]&&&5\ar[uu]}$} 

\put(200,190){
$\xymatrix@C=0.8pc@R=1pc{&&2\ar[lld]&&&3\ar[rrd]\ar@{-->}[dd]\\
1\ar[rrd]&&&&&&&6\ar@{-->}[lllllu]\\
&&4\ar[rrruu]\ar[rrrrru]&&&5\ar[lll]}$}

 \put(200,70){
$\xymatrix@C=0.8pc@R=1pc{&&2\ar[lld]&&&3\ar[rrd]\\
1\ar[rrrrrrr]\ar[rru]&&&&&&&6\\
&&4\ar[llu]\ar[rrr]&&&5\ar[uu]}$} 

\put(0,70){
$\xymatrix@C=0.8pc@R=1pc{&&2\ar[rrrrrd]&&&3\ar[rrd]\\
1\ar[rrd]\ar[rru]&&&&&&&6\\
&&4\ar[rrrrru]\ar[rrr]&&&5\ar[uu]}$}

\put(170,280){$\stackrel{\scriptstyle{\mu^L_1}}{\longrightarrow}$}
\put(260,220){${\downarrow}$}
\put(270,220){${\scriptstyle{\mu^L_4}}$}
\put(170,160){$\stackrel{\scriptstyle{\mu^L_5}}{\longleftarrow}$}
\put(60,100){$\downarrow$}
\put(70,100){${\scriptstyle{\mu^L_2}}$}
\put(170,40){$\stackrel{\scriptstyle{\mu^L_1}}{\longrightarrow}$}
\end{picture}\]
The last quiver is, up to relabelling of the vertices, the same as the quiver $Q'$. Note that each mutation is taken at a vertex which becomes a source after removing the dashed arrows. Therefore by applying Corollary~\ref{cor:iterated-APR-tilting-via-mutation}, we see that $A$ and $A'$ are derived equivalent. 
\end{cexample}

\bigskip
\newcommand{\autoexample}{\noindent {\it Example}~\ref{ex:auto-silting-type-E} (Continued).}
\begin{autoexample}
Recall that $A=kQ$ and $A'=kQ/(\alpha_4\alpha_2\alpha_1)$, where $Q$ be the quiver (where $n=6,7,8$)
\[
\xymatrix@R=1pc{
1\ar[rd]^{\alpha_1} & \\
& 2 \ar[rd]^{\alpha_2} & \\
& & 4\ar[r]^{\alpha_4} & 5\ar@{.}[r] & \cdot\ar[r] & n. \\
& 3\ar[ru] & 
}
\]
According to Theorem~\ref{t:mizuno}, the graded quiver $Q^a$ associated to $A'$ is
\[
\xymatrix@R=1pc{
1\ar[rd]^{\alpha_1} & \\
& 2 \ar[rd]^{\alpha_2} & \\
& & 4\ar[r]^{\alpha_4} & 5\ar@{.}[r]\ar@{-->}@/_20pt/[uulll]_\rho & \cdot\ar[r] & n, \\
& 3\ar[ru] & 
}
\]
where $\rho$ is in degree $1$ and all other arrows are in degree $0$, 
and the associated potential is $W^a=\rho\alpha_4\alpha_2\alpha_1$. This is a rigid quiver with potential.
By Corollary~\ref{cor:iterated-APR-tilting-via-mutation}, the following sequence of left mutations shows that $A$ and $A'$ are derived equivalent. Note that each mutation is taken at a vertex which becomes a source after removing the dashed arrows.
\[
\begin{picture}(600,150)
 \put(-20,130){
\xymatrix@R=1pc{
1\ar[rd] & \\
& 2 \ar[rd] & \\
& & 4\ar[r] & 5\ar@{.}[r]\ar@{-->}@/_20pt/[uulll] & \cdot\ar[r] & n \\
& 3\ar[ru] & 
}} 

\put(230,130){
\xymatrix@R=1pc{
1\ar@/^20pt/[ddrrr] & \\
& 2 \ar[rd] \ar[lu]& \\
& & 4\ar[r] & 5\ar@{.}[r]\ar@{-->}@/_10pt/[ull] & \cdot\ar[r] & n \\
& 3\ar[ru] & 
}}
\put(200,80){$\stackrel{\mu^L_1}{\longrightarrow}$}
\put(320,20){${\downarrow}$}
\put(330,20){${\scriptstyle\mu^L_2}$}
\end{picture}
\]
\[
\begin{picture}(600,100)
\put(230,100){
\xymatrix@R=1pc{
1\ar[dr] & \\
& 2 \ar@/^10pt/[rrd] & \\
& & 4 \ar[lu]& 5\ar@{.}[r] & \cdot\ar[r] & n. \\
& 3\ar[ru] & 
}}
\end{picture}
\]
The last quiver, up to relabelling of the vertices, is $Q$.
\end{autoexample}

\def\cprime{$'$}
\providecommand{\bysame}{\leavevmode\hbox to3em{\hrulefill}\thinspace}
\providecommand{\MR}{\relax\ifhmode\unskip\space\fi MR }
\providecommand{\MRhref}[2]{%
  \href{http://www.ams.org/mathscinet-getitem?mr=#1}{#2}
}
\providecommand{\href}[2]{#2}

\end{document}